\theoremstyle{plain} 
\newtheorem{thm}{Theorem}[section] 
\newtheorem{cor}[thm]{Corollary} 
\newtheorem{lem}[thm]{Lemma} 
\newtheorem{prop}[thm]{Proposition} 
\theoremstyle{definition} 
\newtheorem{defn}{Definition}[section] 
\newtheorem{oss}{Remark}
\newtheorem{ex}{Example}
\DeclareSymbolFont{symbols2}{LS1}{stixfrak} {m} {n}
\DeclareMathSymbol{\operp}{\mathbin}{symbols2}{"A8}
\title{{\bf Partial slice regularity and Fueter's Theorem in several quaternionic variables }}
\author{Giulio Binosi\footnote{ORCID: \texttt{0000-0002-4733-6180}} \\
\small Dipartimento di Matematica, Universit\`a di Trento\\ 
\small Via Sommarive 14, I-38123 Povo Trento, Italy\\
\small giulio.binosi@unitn.it
}
\date{{\small
\textit{
2020 MSC: Primary 30G35; Secondary 30E20; 32A30.
Key words and phrases. Slice-regular functions, Functions of a hypercomplex variable, Quaternions, Clifford algebras.}}}
\begin{document}

\maketitle
\begin{abstract}
    We extend some definitions and give new results about the theory of slice analysis in several quaternionic variables. The sets of slice functions which are respectively slice, slice regular and circular w.r.t. given variables are characterized. We introduce new notions of partial spherical value and derivative for functions of several variables that extend those of one variable. We recover some of their properties as circularity, harmonicity, some relations with differential operators and a Leibniz rule w.r.t. the slice product as well as studying their behavior in the context of several variables. Then, we prove our main result, which is a generalization of Fueter's Theorem for slice regular functions in several variables. This extends the link between slice regular and axially monogenic functions well known in the one variable context.
\end{abstract}

\section{Introduction}
Slice regular functions were firstly introduced in \cite{GentiliStruppa} by Gentili and Struppa for quaternion-valued functions, defined over Euclidean balls with real centre. Exploiting the complex-slice structure of the quaternion algebra $\mathbb{H}$ and following an idea of Cullen \cite{Cullen}, they defined slice regular (or Cullen regular) functions as real differentiable functions which are slice by slice holomorphic.
 The main purpose of this new hypercomplex theory was to overcome the problem encountered by the theory of quaternionic functions already well established by Fueter \cite{Fueter}, in which the class of regular functions does not contain polynomials. 
 On the contrary, the class of slice regular functions contains all the power series with right quaternionic coefficients. The two theories are indeed very skew, since, in general, only constant functions are both Fueter and Cullen regular, even though they present some connections, as Fueter's Theorem suggests.
 We refer the reader to the monograph \cite{LibroCaterina} for a comprehensive treatment of the theory of slice regular functions of one quaternionic variable and to \cite{Gurlebeck}, \cite{Sudbery} for Fueter regular functions.

Interest in this new subject grew rapidly and a large number of papers were published. The theory was soon generalized to more general domains of definition, the so called slice domains \cite{Slicedomain} and extended to octonions \cite{Octonionsetting} and Clifford algebras \cite{Cliffordsetting}. A new viewpoint took place after the work of Ghiloni and Perotti \cite{SRFonAA} with the introduction of stem functions, already used by Fueter to generate axially monogenic functions through Fueter's map \cite{Fueter}. This approach allows to define slice functions, in which no regularity is needed, over any axially symmetric domain and to extend the theory uniformly in any real alternative $^\ast$-algebra with unity.

The stem functions' approach suggested the way to construct a several variable analogue of the theory in the foundational paper \cite{Several}, to which the present article contributes to develop some ideas introduced therein. In that paper, the importance of partial slice regularity has been pointed out. Indeed, it is possible to interpret the slice regularity of an $n$ variables slice function in terms of the one-variable slice regularity of $2^n-1$ slice functions \cite[Theorem 3.23]{Several}, obtained as all possible iterations of partial spherical values and derivatives of that function.
This result establishes a bridge between the one and several variables theories, which has been frequently exploited, for example in \cite{WirtingerPerotti}, where local slice analysis was naturally extended from one to several quaternionic variables. But, the study of partial slice regularity, as well as partial spherical values and derivatives was not developed further and a more detailed study deserved attention, leading to this work.


We describe the structure of the paper. After briefly recalling the theory of slice regular functions of one and several quaternionic variables, we focus on the study of partial slice properties, i.e. sliceness, slice regularity or circularity
w.r.t. a specific subset of variables (\S 3). More precisely, given a set of variables $\{x_h\}_{h\in H}$, we characterize (Propositions \ref{characterization sliceness in xh}, \ref{Proposition characterization h slice regularity} and \ref{proposition characterization partial circularity}) the sets $\mathcal{S}_H$, $\mathcal{S}\mathcal{R}_H$ and $\mathcal{S}_{c,H}$ of slice functions which are, respectively, slice, slice regular and circular w.r.t. all the variables $x_h$. 
The use of stem functions is fundamental as all those characterizations are given through conditions over stem functions. Furthermore,  we show that for every choice of $H\in\mathcal{P}(n)$, the set $\mathcal{S}_{c,H}$ forms a subalgebra of the set of slice functions endowed with the slice product $(\mathcal{S},\odot)$ (Corollary \ref{Corollary subalgebra}); $\mathcal{S}_H$ and $\mathcal{S}\mathcal{R}_H$ do not share this property.

In Chapter 4, we define partial spherical values and derivatives for functions of several variables, which extend the one-variable analogues. We recover some of their main properties such as harmonicity (Proposition \ref{proposizione derivata sferica e armonica}), representation and Leibniz formulas (\ref{representation formula several}), (\ref{Leibniz rule}) and we find new ones, peculiar of the several variables setting (Proposition \ref{proposizione proprieta derivata sferica}), through the characterizations of Chapter 3. Finally, thanks to the harmonicity of the partial spherical derivatives, we prove a generalization of Fueter's Theorem for slice regular functions of several quaternionic variables (Theorem \ref{Teorema di Fueter in piu variabili}), which extends the link between slice regular and axially monogenic functions in higher dimensions.


\section{Preliminaries}
We briefly recall the main definitions of the theory of slice regular functions of one and several quaternionic variables. We state here the definitions of \cite{SRFonAA} and \cite{Several}, reduced to the quaternionic setting.
\subsection{Slice regular functions of one quaternionic variable}
\label{subsection one variable theory}
Let $\mathbb{H}$ denote the algebra of quaternions with basis elements $\{1,i,j,k\}$. We can embed $\mathbb{R}\subset\mathbb{H}$ as the subalgebra generated by $1$, while $\operatorname{Im}(\mathbb{H}):=<i,j,k>$, whence $\mathbb{H}=\mathbb{R}\oplus\operatorname{Im}(\mathbb{H})$. Let $\mathbb{S}_\mathbb{H}:=\{q\in\mathbb{H}\mid q^2=-1\}\subset\operatorname{Im}(\mathbb{H})$ be the sphere of square roots of $-1$, then if $q\in\mathbb{H}\setminus\mathbb{R}$, there exist $\alpha,\beta\in\mathbb{R}$, $J\in\mathbb{S}_\mathbb{H}$ such that $q=\alpha+J\beta$. They are unique if we require $\beta>0$. Every such $q$ generates a sphere we denote with $\mathbb{S}_q=\mathbb{S}_{\alpha,\beta}=\{\alpha+I\beta: I\in\mathbb{S}_\mathbb{H}\}$. Given $J\in\mathbb{S}_\mathbb{H}$, let $\phi_J:\mathbb{C}\ni \alpha+i\beta\mapsto\alpha+J\beta\in\mathbb{H}$. It is clear \cite[(1)]{SRFonAA} that $\phi_J$ is a real $^\ast$-algebras isomorphism onto $\mathbb{C}_J:=\left<1,J\right>_\mathbb{R}\subset\mathbb{H}$.

Denote with $\{1,e_1\}$ a basis of $\mathbb{R}^2$.
Let $D\subset\mathbb{C}$ be a conjugate invariant domain ($\overline{D}=D$), a function $F:D\rightarrow\mathbb{H}\otimes\mathbb{R}^2$ is a stem function if it is complex intrinsic, i.e. $F(\overline{z})=\overline{F(z)}$, which means that if $F$ has components $F=F_\emptyset+e_1F_1$, they satisfy $F_\emptyset(\overline{z})=F_\emptyset(z)$ and $F_1(\overline{z})=-F_1(z)$. Given such a set $D$, we define its circularization in $\mathbb{H}$ as $\Omega_D:=\{\alpha+J\beta\mid \alpha+i\beta\in D, J\in\mathbb{S}_\mathbb{H}\}=\bigcup_{\alpha+i\beta\in D}\mathbb{S}_{\alpha,\beta}$. We can associate to every stem function $F=F_\emptyset+e_1F_1:D\rightarrow\mathbb{H}\otimes\mathbb{R}^2$ a unique slice function $f=\mathcal{I}(F):\Omega_D\rightarrow\mathbb{H}$ as follows: if $x=\alpha+J\beta=\phi_J(z)$ for some $z=\alpha+i\beta\in D$ and $J\in\mathbb{S}_\mathbb{H}$, we define
\begin{equation*}
    f(x)=F_\emptyset(z)+JF_1(z).
\end{equation*}
Every slice function can be completely recovered by its value over one slice $\mathbb{C}_J$, with a representation formula \cite[Proposition 6]{SRFonAA}: let $I,J\in\mathbb{S}_\mathbb{H}$, then for every $x=\alpha+I\beta$ it holds
\begin{equation}
\label{representation formula}
    f(x)=\frac{1}{2}f((\alpha+J\beta)+f(\alpha-J\beta))-\frac{IJ}{2}(f(\alpha+J\beta)-f(\alpha+J\beta)).
\end{equation}
Given a slice function $f$, we define its spherical value and its spherical derivative respectively as 
\begin{equation*}
\label{definition spherical operators one variable}
    f^\circ_s(x)=\frac{1}{2}(f(x)+f(\overline{x})),\qquad f'_s(x)=\frac{1}{2}[\operatorname{Im}(x)]^{-1}(f(x)-f(\overline{x})).
\end{equation*}
Note that the spherical value and the spherical derivative are both slice functions, as $f^\circ_s=\mathcal{I}(F_\emptyset)$ and $f'_s=\mathcal{I}(F_1(z)/\operatorname{Im}(z))$, if $f=\mathcal{I}(F_\emptyset+e_1F_1)$. Moreover, applying (\ref{representation formula}) with $I=J$ we get
\begin{equation*}
    f(x)=f^\circ_s(x)+\operatorname{Im}(x)f'_s(x).
\end{equation*}
We can define a product over slice functions. Let $F,G$ be two stem functions with $F=F_\emptyset+e_1 F_1$ and $G=G_\emptyset+e_1G_1$. Define $F\otimes G:=F_\emptyset G_\emptyset-F_1G_1+e_1(F_\emptyset G_1+F_1G_\emptyset)$, which happens to be a stem function. Now, if $f=\mathcal{I}(F)$ and $g=\mathcal{I}(G)$, define $f\odot g:=\mathcal{I}(F\otimes G)$. With respect to this product, the spherical derivative satisfies a Lebniz rule:
\begin{equation*}
\label{leibniz formula one variable}
    (f\odot g)'_s=f'_s\odot g^\circ_s+f^\circ_s\odot g'_s.
\end{equation*}
Let $F$ be a $\mathcal{C}^1$ stem function. Define 
\begin{equation*}
    \frac{\partial F}{\partial z}=\frac{1}{2}\left(\frac{\partial F}{\partial\alpha}-e_1 \frac{\partial F}{\partial\beta}\right),\qquad \frac{\partial F}{\partial \overline{z}}=\frac{1}{2}\left(\frac{\partial F}{\partial\alpha}+e_1 \frac{\partial F}{\partial\beta}\right).
\end{equation*}
Since both $\partial F/\partial z$ and $\partial F/\partial \overline{z}$ are stem functions, we can define
\begin{equation*}
    \frac{\partial f}{\partial x}=\mathcal{I}\left(\frac{\partial F}{\partial z}\right),\qquad \frac{\partial f}{\partial x^c}=\mathcal{I}\left(\frac{\partial F}{\partial \overline{z}}\right).
\end{equation*}
Finally, a slice function $f=\mathcal{I}(F)$ is said to be slice regular if $\partial f/\partial x^c=0$ or, equivalently, if $\partial F/\partial \overline{z}=0$. Note that \cite[Proposition 8]{SRFonAA}, if $\Omega_D\cap\mathbb{R}\neq\emptyset$, the definition of slice regular function coincide with the one given by Gentili and Struppa \cite{GentiliStruppa}, namely that, for every $J\in\mathbb{S}_\mathbb{H}$ the restriction of $f$, $f_J:\Omega_D\cap\mathbb{C}_J\rightarrow\mathbb{H}$ is holomorphic w.r.t. the complex structure defined by multiplication by $J$.

\subsection{Slice regular functions of several quaternionic variables}

Let $n$ be a positive integer and let $\mathcal{P}(n)$ denote all possible subsets of $\{1,...,n\}$. Given an ordered set $K=\{k_1,...,k_p\}\in\mathcal{P}(n)$, with $k_1<...<k_p$ and an associated $p$-tuple $(q_{k_1},...,q_{k_p})\in\mathbb{H}^p$, we define $q_K:=q_{k_1}\cdot...\cdot q_{k_p}$ (with $q_\emptyset:=1$) and for any $\tilde{q}\in\mathbb{H}$, $[q_K,\tilde{q}]:=q_K\cdot\tilde{q}$.

Given $z=(z_1,...,z_n)\in\mathbb{C}^n$, set $\overline{z}^h:=(z_1,...,z_{h-1},\overline{z}_h,z_{h+1},...,z_n)$, $\forall h=1,...,n$. A set $D\subset\mathbb{C}^n$ is called invariant w.r.t. complex conjugation whenever $z\in D$ if and only if $\overline{z}^h\in D$ for every $h\in\{1,...,n\}.$ We define its circularization $\Omega_D\subset\mathbb{H}^n$ as
\begin{equation*}
    \Omega_D:=\{(\alpha_1+J_1\beta_1,...,\alpha_n+J_n\beta_n)\mid (\alpha_1+i\beta_1,...,\alpha_n+i\beta_n)\in D, J_1,...,J_n\in\mathbb{S}_\mathbb{H}\}
\end{equation*}
and we call circular those sets $\Omega$ such that $\Omega=\Omega_D$ for some $D\subset\mathbb{C}^n$, invariant w.r.t. complex conjugation.
From now on, we will always assume $D$ an invariant subset of $\mathbb{C}^n$ w.r.t. complex conjugation and $\Omega_D$ a circular set of $\mathbb{H}^n$.

Let $\{e_1,...,e_n\}$ be an orthonormal frame of $\mathbb{R}^n$ and denote with $\{e_K\}_{K\in\mathcal{P}(n)}$ a basis of $\mathbb{R}^{2^n}$. Consider a function $F:D\rightarrow\mathbb{H}\otimes\mathbb{R}^{2^n}$, $F=\sum_{K\in\mathcal{P}(n)}e_KF_K$,
in which its components $\left\{F_K\right\}_{K\in\mathcal{P}(n)}$ are $\mathbb{H}$-valued functions. We call $F$ a stem function if $\forall K\in\mathcal{P}(n)$, $\forall h=1,...,n$
\begin{equation}
\label{defining property of stem}
    F_K(\overline{z}^h)=(-1)^{|K\cap\{h\}|}F_K(z).
\end{equation}
Write $Stem(D)$ for the set of all stem functions from $D$ to $\mathbb{H}\otimes\mathbb{R}^{2^n}$.

A map $f:\Omega_D\subset\mathbb{H}^n\rightarrow\mathbb{H}$ is called slice function if there exists a stem function $F:D\rightarrow\mathbb{H}\otimes\mathbb{R}^{2^n}$, $F=\sum_{K\in\mathcal{P}(n)}e_KF_K$, such that
\begin{equation*}
    f(x)= \sum_{K\in\mathcal{P}(n)}[J_K,F_K(z)],\qquad\forall x\in\Omega_D,
\end{equation*}
where $x=(x_1,...,x_n)$, with $x_i=\alpha_i+J_i\beta_i$, for some $\alpha_i,\beta_i\in\mathbb{R}$, $J_i\in\mathbb{S}_\mathbb{H}$ and $z=(z_1,...,z_n)\in D$, $z_i=\alpha_i+i\beta_i$, for $i=1,...,n$. Note that (\ref{defining property of stem}) is necessary to make slice functions well defined. We say that $f$ is induced by $F$. $\mathcal{S}(\Omega_D)$ will denote the set of all slice functions from $\Omega_D$ to $\mathbb{H}$ and $\mathcal{I}:Stem(D)\rightarrow \mathcal{S}(\Omega_D)$ will be the map sending a stem function to its induced slice function. By \cite[Proposition 2.12]{Several}, every slice function is induced by a unique stem function, so $\mathcal{I}$ is an injective map.

We can define slice functions through a commutative diagram too: for any $J_1,...,J_n\in\mathbb{S}_\mathbb{H}$ define $$\phi_{J_1}\times...\times\phi_{J_n}:\mathbb{C}^n\ni(z_1,...,z_n)\mapsto\left(\phi_{J_1}(z_1),...,\phi_{J_n}(z_n)\right)\in\mathbb{H}^n$$
and $$\Phi_{J_1,...,J_n}:\mathbb{H}\otimes\mathbb{R}^{2^n}\ni \sum_{K\in\mathcal{P}(n)}e_Ka_K\mapsto \sum_{K\in\mathcal{P}(n)}\left[J_K,a_K\right]\in\mathbb{H}.$$
Given $F\in Stem(D)$, we can define its induced slice function $f=\mathcal{I}(F)$ as the unique slice function that makes the following diagram commutative for any $J_1,...,J_n\in\mathbb{S}_\mathbb{H}$:
\begin{center}
    \begin{tikzcd}
	{D} && {\mathbb{H}\otimes\mathbb{R}^{2^n}} \\
	& \circlearrowleft \\
	{\Omega_D} && {\mathbb{H}}
	\arrow["F", from=1-1, to=1-3]
	\arrow["{\phi_{J_1}\times...\times\phi_{J_n}}"', from=1-1, to=3-1]
	\arrow["f"', from=3-1, to=3-3]
	\arrow["{\Phi_{J_1,...,J_n}}", from=1-3, to=3-3]
\end{tikzcd}
\end{center}

As described in \cite[Definition 2.31, Lemma 2.32]{Several}, equip $\mathbb{R}^{2^n}$ with a $\Delta$-product  $\otimes:\mathbb{R}^{2^n}\times\mathbb{R}^{2^n}\rightarrow\mathbb{R}^{2^n}$, defined on each basis element as
\begin{equation*}
    e_H\otimes e_K:=(-1)^{|H\cap K|}e_{H\Delta K},
\end{equation*}
where $H\Delta K=(H\cup K)\setminus(H\cap K)$ and extended by linearity to all $\mathbb{R}^{2^n}$. This product induces a product on $\mathbb{H}\otimes\mathbb{R}^{2^n}$: given $a,b\in\mathbb{H}\otimes\mathbb{R}^{2^n}$, $a=\sum_{H\in\mathcal{P}(n)}e_Ha_H$ and $b=\sum_{K\in\mathcal{P}(n)}e_Kb_K$, with $a_H,b_K\in\mathbb{H}$, define
\begin{equation*}
    a\otimes b:= \sum_{H,K\in\mathcal{P}(n)}(e_H\otimes e_K)(a_Hb_K)= \sum_{H,K\in\mathcal{P}(n)}(-1)^{|H\cap K|}e_{H\Delta K}a_Hb_K,
\end{equation*}
where $a_Hb_K$ is just the usual product of quaternions.
Furthermore, we can define a product between stem functions as the pointwise product induced by $\otimes$: let $F,G\in Stem(D)$, define $(F\otimes G)(z):=F(z)\otimes G(z)$. More precisely, if $F=\sum_{H\in\mathcal{P}(n)}e_HF_H$ and $G=\sum_{K\in\mathcal{P}(n)}e_KG_K$,
\begin{equation*}
    (F\otimes G)(z):= \sum_{H,K\in\mathcal{P}(n)}(-1)^{|H\cap K|}e_{H\Delta K}F_H(z)G_K(z).
\end{equation*}
The advantage of this definition is that the product of two stem functions is again a stem function \cite[Lemma 2.34]{Several} and this allows to define a product on slice functions, too. Let $f,g\in\mathcal{S}(\Omega_D)$, with $f=\mathcal{I}(F)$ and $g=\mathcal{I}(G)$, then define the slice tensor product $f\odot g$ between $f$ and $g$ as 
\begin{equation*}
    f\odot g:=\mathcal{I}(F\otimes G).
\end{equation*}

Equip $\mathbb{R}^{2^n}$ with the family of commutative complex structures $\mathcal{J}=\left\{\mathcal{J}_h:\mathbb{R}^{2^n}\rightarrow\mathbb{R}^{2^n}\right\}_{h=1}^n,$
where each $\mathcal{J}_h$ is defined over any basis element $e_K$ of $\mathbb{R}^{2^n}$ as
\begin{equation*}
    \mathcal{J}_h(e_K):=(-1)^{|K\cap\{h\}|}e_{K\Delta\{h\}}=\left\{\begin{array}{ll}
        e_{K\cup\{h\}} &\text{ if }h\notin K  \\
        -e_{K\setminus\{h\}} &\text{ if }h\in K,
    \end{array}\right.
\end{equation*}
and extend it by linearity to all $\mathbb{R}^{2^n}$. $\mathcal{J}$ induces a family of commutative complex structure on $\mathbb{H}\otimes\mathbb{R}^{2^n}$ (by abuse of notation, we use the same symbol) 
$\mathcal{J}=\left\{\mathcal{J}_h:\mathbb{H}\otimes\mathbb{R}^{2^n}\rightarrow\mathbb{H}\otimes\mathbb{R}^{2^n}\right\}_{h=1}^n$ according to the formula $$\mathcal{J}_h(q\otimes a):=q\otimes\mathcal{J}_h(a)\qquad\forall q\in\mathbb{H},\quad\forall a\in\mathbb{R}^{2^n}.$$

We can associate two Cauchy-Riemann operators to each complex structure $\mathcal{J}_h$. Given $F\in Stem(D)\cap\mathcal{C}^1(D)$, we define
\begin{equation*}
    \partial_hF:=\dfrac{1}{2}\left(\dfrac{\partial F}{\partial \alpha_h}-\mathcal{J}_h\left(\dfrac{\partial F}{\partial\beta_h}\right)\right),\qquad \overline{\partial}_hF:=\dfrac{1}{2}\left(\dfrac{\partial F}{\partial \alpha_h}+\mathcal{J}_h\left(\dfrac{\partial F}{\partial\beta_h}\right)\right).
\end{equation*}
Note that, if $F$ is a stem function, so are $\partial_hF$ and $\overline{\partial}_hF$ \cite[Lemma 3.9]{Several}. Thus, if $f=\mathcal{I}(F)\in\mathcal{S}^1(\Omega_D):=\mathcal{I}(Stem(D)\cap\mathcal{C}^1(\Omega_D))$, we can define the partial derivatives for every $h=1,...,n$
\begin{equation*}
    \dfrac{\partial f}{\partial x_h}:=\mathcal{I}\left(\partial_hF\right),\qquad \dfrac{\partial f}{\partial  x^c_h}:=\mathcal{I}\left(\overline{\partial}_hF\right).
\end{equation*}
A $\mathcal{C}^1$ stem function $F=\sum_{K\in\mathcal{P}(n)}e_KF_K$ is called $h$-holomorphic w.r.t. $\mathcal{J}$ if $\overline{\partial}_hF\equiv0$ or equivalently \cite[Lemma 3.12]{Several}, if its components satisfies a system of Cauchy-Riemann equations
\begin{equation}
    \label{Cauchy-Riemann equations}
    \dfrac{\partial F_K}{\partial\alpha_h}=    \dfrac{\partial F_{K\cup\{h\}}}{\partial\beta_h},\qquad
    \dfrac{\partial F_K}{\partial\beta_h}=-    \dfrac{\partial F_{K\cup\{h\}}}{\partial\alpha_h},\qquad\forall K\in\mathcal{P}(n), h\notin K
\end{equation}
and it is called holomorphic if it is $h$-holomorphic for every $h=1,...,n$. Finally, given a holomorphic stem function $F$, the induced slice function $\mathcal{I}(F)$ will be called slice regular function. The set of all slice regular functions from $\Omega_D$ to $\mathbb{H}$ will be denoted by $\mathcal{S}\mathcal{R}(\Omega_D)$. By \cite[Proposition 3.13]{Several}, $f\in\mathcal{S}\mathcal{R}(\Omega_D)$ if and only if $\frac{\partial f}{\partial  x^c_h}=0$ for every $h=1,...,n$.

We recall two other operators on $\mathbb{H}$, known as Cauchy-Riemann-Fueter operators:
\begin{equation*}
    \partial_{CRF}:=\dfrac{\partial}{\partial\alpha}-i\dfrac{\partial}{\partial\beta}-j\dfrac{\partial}{\partial\gamma}-k\dfrac{\partial}{\partial\delta},\qquad
    \overline{\partial}_{CRF}:=\dfrac{\partial}{\partial\alpha}+i\dfrac{\partial}{\partial\beta}+j\dfrac{\partial}{\partial\gamma}+k\dfrac{\partial}{\partial\delta},
\end{equation*}
where $\alpha$, $\beta$, $\gamma$ and $\delta$ denotes the four real components of a quaternion $x=\alpha+i\beta+j\gamma+k\delta$. Functions in the kernel of $\overline{\partial}_{CRF}$ are usually called Fueter regular (or monogenic in the context of Clifford algebras). The importance of these operators 
is evident as they
factorize the Laplacian, indeed
\begin{equation*}
\label{equazione fattorizzazione laplaciano}
    \partial_{CRF}\overline{\partial}_{CRF}=\overline{\partial}_{CRF}\partial_{CRF}=\Delta,
\end{equation*}
thus, monogenic functions are in particular harmonic.
We can extend these operators to $\mathbb{H}^n$: for a slice function  $f:\Omega_D\rightarrow\mathbb{H}$, we define, for any $h=1,...,n$, $\partial_{x_h}$ and $\overline{\partial}_{x_h}$ as the Cauchy-Riemann-Fueter operators w.r.t. $x_h:=\alpha_h+i\beta_h+j\gamma_h+k\delta_h$:
\begin{equation*}
    \partial_{x_h}:=\dfrac{\partial}{\partial\alpha_h}-i\dfrac{\partial}{\partial\beta_h}-j\dfrac{\partial}{\partial\gamma_h}-k\dfrac{\partial}{\partial\delta_h},\qquad
    \overline{\partial}_{x_h}:=\dfrac{\partial}{\partial\alpha_h}+i\dfrac{\partial}{\partial\beta_h}+j\dfrac{\partial}{\partial\gamma_h}+k\dfrac{\partial}{\partial\delta_h}.
\end{equation*}
For every $h=1,...,n$ it holds
\begin{equation*}
\label{equazione fattorizzazione laplaciano}
    \partial_{x_h}\overline{\partial}_{x_h}=\overline{\partial}_{x_h}\partial_{x_h}=\Delta_h,
\end{equation*}
where $\Delta_h=\frac{\partial^2}{\partial\alpha^2_h}+\frac{\partial^2}{\partial\beta^2_h}+\frac{\partial^2}{\partial\gamma^2_h}+\frac{\partial^2}{\partial\delta^2_h}$.
Finally, denote by $\mathcal{M}_h(\Omega):=\{f:\Omega\rightarrow\mathbb{H}:\overline{\partial}_{x_h}f=0\}$ the set of monogenic functions w.r.t $x_h$ and let $\mathcal{A}\mathcal{M}_h(\Omega_D):=\mathcal{M}_h(\Omega_D)\cap\mathcal{S}^1(\Omega_D)$ be the set of axially monogenic functions w.r.t. $x_h$, i.e. the set of slice functions which are monogenic w.r.t. $x_h$.


\section{Characterization of $\mathcal{S}_H$, $\mathcal{S}\mathcal{R}_H$ and $\mathcal{S}_{c,H}$}
\label{Sezione proprietà sliceness rispetto a variabile}
Let $f:\Omega_D\subset\mathbb{H}^n\rightarrow\mathbb{H}$ and $h=1,...,n$. For any $y=(y_1,...,y_n)\in\Omega_D$, let 
$$\Omega_{D,h}(y):=\{x\in\mathbb{H}\mid (y_1,...,y_{h-1},x,y_{h+1},...,y_n)\in\Omega_D\}\subset\mathbb{H}.$$
It is easy to see (\cite[\S 2]{Several}) that $\Omega_{D,h}(y)$ is a circular set of $\mathbb{H}$, more precisely $\Omega_{D,h}(y)=\Omega_{D_h(z)}$, where
\begin{equation*}
    D_h(z):=\{w\in\mathbb{C}\mid (z_1,...,z_{h-1},w,z_{h+1},...,z_n)\in D\},
\end{equation*}
and $z=(z_1,...,z_n)$ is such that $y\in\Omega_{\{z\}}$. 

\begin{defn}
We say that a slice function $f\in\mathcal{S}(\Omega_D)$ is \emph{slice} (resp. \emph{slice regular}) \emph{w.r.t.} $x_h$ if, $\forall y\in\Omega_D$, its restriction
\begin{equation*}
    f^y_h:\Omega_{D,h}(y)\rightarrow\mathbb{H}, \ f^y_h(x):=f(y_1,...,y_{h-1},x,y_{h+1},...,y_n)
\end{equation*}
is a one variable slice (resp. slice regular) function, as defined in \S\ref{subsection one variable theory} . 
We denote by $\mathcal{S}_h(\Omega_D)$ (resp. $\mathcal{S}\mathcal{R}_h(\Omega_D)$) the set of slice functions from $\Omega_D$ to $\mathbb{H}$ that are slice (resp. slice regular) w.r.t. $x_h$. For $H\in\mathcal{P}(n)$, define $\mathcal{S}_H(\Omega_D):=\bigcap_{h\in H}\mathcal{S}_h(\Omega_D)$, $\mathcal{S}\mathcal{R}_H(\Omega_D):=\bigcap_{h\in H}\mathcal{S}\mathcal{R}_h(\Omega_D)$. Note that, by definition, $\mathcal{S}\mathcal{R}_H(\Omega_D)\subset\mathcal{S}_H(\Omega_D)\subset\mathcal{S}(\Omega_D)$.
 
We say that $f$ is \emph{circular w.r.t.} $x_h$ if $\forall y=(y_1,...,y_n)\in\Omega_D$, $f^y_h$ is constant on $\mathbb{S}_{y_h}\subset\mathbb{H}$.
The set of slice functions which are circular w.r.t. $x_h$ will be denoted by $\mathcal{S}_{c,h}(\Omega_D)\subset\mathcal{S}(\Omega_D)$. Note that $f$ is circular w.r.t. $x_h$ if and only if for every orthogonal transformation $T:\mathbb{H}\rightarrow\mathbb{H}$ that fixes $1$, it holds $f(x_1,...,x_{h-1},T(x_h),x_{h+1},...,x_n)=f(x_1,...,x_n)$, for any $(x_1,...,x_n)\in\Omega_D$. In this case, if $x_h=\alpha_h+J_h\beta_h$, $f$ does not depend on $J_h$. Finally, if $H\in\mathcal{P}(n)$, set $\mathcal{S}_{c,H}(\Omega_D):=\bigcap_{h\in H}\mathcal{S}_{c,h}(\Omega_D)$.
\end{defn}

Every slice function is, in particular, slice w.r.t. the first variable \cite[Proposition 2.23]{Several}, i.e. $\mathcal{S}_1(\Omega_D)=\mathcal{S}(\Omega_D)$, but in general $\mathcal{S}_h(\Omega_D)\subsetneq \mathcal{S}(\Omega_D)$. The next proposition characterizes the set $\mathcal{S}_H(\Omega_D)$ for any $H\in\mathcal{P}(n)$ in terms of stem functions.

\begin{prop}
\label{characterization sliceness in xh}
For every $H\in\mathcal{P}(n)$ it holds
\begin{equation}
\label{equazione caratterizzazione sliceness in H}
    \mathcal{S}_H(\Omega_D)=\left\{\mathcal{I}(F) : F\in Stem(D), \ F=\sum_{K\in H^c}e_HF_K+ \sum_{h\in H}e_{\{h\}}\sum_{Q\subset\{h+1,...,n\}\setminus H}e_QF_{\{h\}\cup Q}\right\}.
\end{equation}
In particular, for any $h\in\{1,...,n\}$, 
\begin{equation}
\label{equation characterization sliceness in xh}
    \mathcal{S}_h(\Omega_D)=\left\{\mathcal{I}(F) : F\in Stem(D), \ F=\sum_{K\in\mathcal{P}(n), h\notin K}e_HF_K+e_{\{h\}}\sum_{Q\subset\{h+1,...,n\}}e_QF_{\{h\}\cup Q}\right\}.
\end{equation}
Equivalently, $f=\mathcal{I}(F)\in\mathcal{S}_H(\Omega_D)$ if and only if $F_{P\cup\{h\}\cup Q}=0$, $\forall h\in H$, $\forall Q\subset\{h+1,...,n\}$, $\forall P\in\mathcal{P}(h-1)$ with $P\neq\emptyset$.
\end{prop}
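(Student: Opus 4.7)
The plan is to reduce everything to the single-variable case $H=\{h\}$. Since by definition $\mathcal{S}_H(\Omega_D)=\bigcap_{h\in H}\mathcal{S}_h(\Omega_D)$, both (\ref{equazione caratterizzazione sliceness in H}) and its ``equivalently'' reformulation follow by intersecting the single-variable conditions in (\ref{equation characterization sliceness in xh}) over $h\in H$; I therefore focus on the single-variable statement: $f=\mathcal{I}(F)\in\mathcal{S}_h(\Omega_D)$ if and only if $F_{P\cup\{h\}\cup Q}\equiv 0$ for every nonempty $P\in\mathcal{P}(h-1)$ and every $Q\subset\{h+1,\dots,n\}$.

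Fix $y\in\Omega_D$ with $y_k=\phi_{J_k}(z_k)$ and set $x=\phi_I(w)$, $z_*=(z_1,\dots,z_{h-1},w,z_{h+1},\dots,z_n)$. The stem function representation splits as
\begin{equation*}
 f^y_h(x)=\sum_{K:\,h\notin K}J_K F_K(z_*)+\sum_{P\subset\{1,\ldots,h-1\},\,Q\subset\{h+1,\ldots,n\}}J_P\,I\,J_Q\,F_{P\cup\{h\}\cup Q}(z_*).
\end{equation*}
The first sum is independent of $I$; within the second, the $P=\emptyset$ terms collapse to $I\sum_Q J_Q F_{\{h\}\cup Q}(z_*)$, already of the form ``$I$ times an $I$-independent quaternion''. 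For the sufficient direction, if $F_{P\cup\{h\}\cup Q}\equiv 0$ whenever $P\neq\emptyset$, one gets $f^y_h(x)=A(w)+IB(w)$ with $A,B$ independent of $I$, and the parity (\ref{defining property of stem}) of $F$ with respect to the $h$-th variable makes $A+e_1 B$ a one-variable stem function inducing $f^y_h$.

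For necessity, by uniqueness of the one-variable stem function of $f^y_h$, its spherical derivative $\tfrac12[\operatorname{Im}(x)]^{-1}(f^y_h(x)-f^y_h(\overline x))$ must be independent of $I$. After subtracting the $P=\emptyset$ contribution this reads
\begin{equation*}
\sum_{P\neq\emptyset,\,Q} J_P\,I\,J_Q\,F_{P\cup\{h\}\cup Q}(z_*)=I\,\widetilde G^y(w),\qquad \widetilde G^y\ \text{independent of}\ I,
\end{equation*}
required for all $z_*\in D$, all $J_k\in\mathbb{S}_\mathbb{H}$ ($k\neq h$) and all $I\in\mathbb{S}_\mathbb{H}$. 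Multiplying on the left by $-I$ and using the identity $-I J_P I=\overline{J_P}+2\langle I,\operatorname{Im}(J_P)\rangle I$, valid for $I\in\mathbb{S}_\mathbb{H}$ and any $J_P\in\mathbb{H}$, this reduces to requiring that the $\mathbb{R}$-linear map
\[
R(I):=\sum_{P\neq\emptyset,\,Q}\langle I,\operatorname{Im}(J_P)\rangle\,J_Q\,F_{P\cup\{h\}\cup Q}(z_*)
\]
on $\operatorname{Im}(\mathbb{H})$ take the form $R(I)=-Ic$ for some $c\in\mathbb{H}$.

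The hard part will be to extract from this last condition, imposed for every choice of $J_k\in\mathbb{S}_\mathbb{H}$, the vanishing of each $F_{P\cup\{h\}\cup Q}(z_*)$ with $P\neq\emptyset$. My plan is to isolate one $P$ at a time by judicious choices of the $J_k$'s: for instance, setting $J_k=j$ for $k\in\{1,\ldots,h-1\}\setminus\{p\}$ kills the contribution of every $P'$ of even size and leaves only those $P'\ni p$ of lowest order; varying $J_p$ then further separates these, and varying the $J_k$'s for $k>h$ isolates each $Q$. At each stage the rank-one dependence in $I$ on the left-hand side collides with the three-dimensional range of $I\mapsto -Ic$, forcing the relevant aggregate of coefficients to vanish, and an induction on $|P|$ then disposes of the higher-order $P$'s. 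This quaternionic linear-algebra extraction is the main technical obstacle.
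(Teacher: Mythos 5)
Your sufficiency direction is fine and essentially coincides with the paper's: under the vanishing hypothesis you exhibit $f^y_h(x)=A(w)+IB(w)$ and check via \eqref{defining property of stem} that $A+e_1B$ is a one-variable stem function. Your reduction of the necessity direction is also legitimate: since the spherical value $\tfrac12(f^y_h(x)+f^y_h(\overline x))$ is automatically $I$-independent, sliceness of $f^y_h$ is indeed equivalent to the $I$-independence of the spherical derivative, and your identity $-IJ_PI=\overline{J_P}+2\langle I,\operatorname{Im}(J_P)\rangle I$ correctly converts this into the condition $R(I)=-Ic$.

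However, the proof is not complete: the step that actually yields the conclusion --- deducing $F_{P\cup\{h\}\cup Q}(z_*)=0$ for every $P\neq\emptyset$ from the requirement that $R(I)=\sum_{P\neq\emptyset,Q}\langle I,\operatorname{Im}(J_{P})\rangle J_Q F_{P\cup\{h\}\cup Q}(z_*)$ be of the form $-Ic$ for all choices of the units --- is only announced as a ``plan'' and explicitly left as ``the main technical obstacle.'' This is precisely the content of the proposition, so it cannot be deferred. Moreover, the sketch as stated is shaky: the claim that setting $J_k=j$ for $k\in\{1,\dots,h-1\}\setminus\{p\}$ ``kills every $P'$ of even size'' ignores the noncommutativity and the ordering in $J_{P'}=J_{k_1}\cdots J_{k_m}$ when $p\in P'$ (one gets $\pm j^aJ_pj^b$, whose imaginary part need not vanish), and the induction on $|P|$ is not set up. The rank-collision idea (a map $\langle I,v\rangle\mapsto\langle I,v\rangle q$ has rank $\le 1$ while $I\mapsto -Ic$ has rank $0$ or $3$) does close the case of a single nonempty $P$, but for several interfering $P$'s and $Q$'s you still owe the separation argument. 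For comparison, the paper avoids this by keeping the representation formula with its extra free unit $J$: after separating the identity term by term over $K$ (using that it holds for all units), each surviving term with $P\neq\emptyset$ forces the scalar relation $J_PI=-IJJ_PJ$, which visibly fails for suitable $I,J,J_P$, hence $F_{P\cup\{h\}\cup Q}\equiv0$. Either finish your linear-algebra extraction in detail or switch to the representation-formula route; as written, the necessity direction is unproved.
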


\begin{proof}
Since $\mathcal{S}_H(\Omega_D):=\bigcap_{h\in H}\mathcal{S}_h(\Omega_D)$, it is sufficient to assume $H=\{h\}$ for some $h=1,...,n$.
\begin{enumerate}
    \item [$\Rightarrow)$] $f\in\mathcal{S}_h(\Omega_D)$ means that $\forall y\in\Omega_D$, the one-variable function $f^y_h$ is slice, thus, it must satisfies representation formula (\ref{representation formula}): namely, if $x=a+Ib\in\Omega_{D,h}(y)$ and $J\in\mathbb{S}_\mathbb{H}$, it holds
    \begin{equation}
    \label{representation formula for fyh}
        f^y_h(x)=\dfrac{1}{2}\left(f^y_h(a+Jb)+f^y_h(a-Jb)\right)-\dfrac{IJ}{2}\left(f^y_h(a+Jb)-f^y_h(a-Jb)\right).
    \end{equation}
    Set $z=(z_1,...,z_n)$, $z'=(z_1,...,z_{h-1})$, $z"=(z_{h+1},...,z_n)$, $y=(\phi_{J_1}\times...\times\phi_{J_n})(z)$, for some $J_1,...,J_n\in\mathbb{S}_\mathbb{H}$, $w=a+ib$, $x=\phi_I(w)$, $L_s=M_s=J_s$ for $s\neq h$, $L_h:=I$ and $M_h:=J$. Then we have
    \begin{equation}
    \label{lk unita immaginarie}
        \begin{split}
            f^y_h(x)=\sum_{K\in\mathcal{P}(n), h\notin K}[J_K,F_K(z',w,z'')]+ \sum_{K\in\mathcal{P}(n), h\notin K}[L_{K\cup\{h\}},F_{K\cup\{h\}}(z',w,z'')],
        \end{split}
    \end{equation}
    \begin{equation*}
    \begin{split}
        f^y_h(a+Jb)&= \sum_{K\in\mathcal{P}(n),h\notin K}[J_K,F_K(z',w,z'')]+ \sum_{K\in\mathcal{P}(n),h\notin K}[M_{K\cup\{h\}},F_{K\cup\{h\}}(z',w,z'')]
    \end{split}
    \end{equation*}
    and
    \begin{equation*}
    \begin{split}
        f^y_h(a-Jb)&= \sum_{K\in\mathcal{P}(n),h\notin K}[J_K,F_K(z',\overline{w},z'')]+ \sum_{K\in\mathcal{P}(n),h\notin K}[M_{K\cup\{h\}},F_{K\cup\{h\}}(z',\overline{w},z'')]\\
        &= \sum_{K\in\mathcal{P}(n),h\notin K}[J_K,F_K(z',w,z'')]- \sum_{K\in\mathcal{P}(n),h\notin K}[M_{K\cup\{h\}},F_{K\cup\{h\}}(z',w,z'')],
    \end{split}
    \end{equation*}
    where we have used (\ref{defining property of stem}). Thus, the right hand side of (\ref{representation formula for fyh}) becomes
    \begin{equation}
    \label{rhs riscritto}
        \begin{split}
            &\dfrac{1}{2}\left(f^y_h(a+Jb)+f^y_h(a-Jb)\right)-\dfrac{I}{2}\left[J\left(f^y_h(a+Jb)-f^y_h(a-Jb)\right)\right]=\\
            &= \sum_{K\in\mathcal{P}(n),h\notin K}[J_K,F_K(z',w,z'')]-IJ \sum_{K\in\mathcal{P}(n),h\notin K}[M_{K\cup\{h\}},F_{K\cup\{h\}}(z',w,z'')].
        \end{split}
    \end{equation}
    Comparing (\ref{lk unita immaginarie}) and (\ref{rhs riscritto}), (\ref{representation formula for fyh}) is satisfied if and only if
    \begin{equation}
    \label{equazione con sommatoria}
         \sum_{K\in\mathcal{P}(n), h\notin K}[L_{K\cup\{h\}},F_{K\cup\{h\}}(z',w,z'')]=-IJ \sum_{K\in\mathcal{P}(n),h\notin K}[M_{K\cup\{h\}},F_{K\cup\{h\}}(z',w,z'')].
    \end{equation}
    Since (\ref{representation formula for fyh}) is assumed to be true for every $I,J,J_1,...,J_n\in\mathbb{S}_\mathbb{H}$ and every $z',w,z''$, (\ref{equazione con sommatoria}) holds if and only if $\forall K\subset\{1,...,n\}\setminus\{h\}$
    \begin{equation}
     \label{eq senza somma}
        [L_{K\cup\{h\}},F_{K\cup\{h\}}(z',w,z'')]=-IJ[M_{K\cup\{h\}},F_{K\cup\{h\}}(z',w,z'')].
    \end{equation}
Indeed, if (\ref{eq senza somma}) were not true, there would be a $K\subset\mathcal{P}(\{1,...,n\}\setminus \{h\})$ such that
\begin{equation*}
    [L_{K\cup\{h\}},F_{K\cup\{h\}}(z',w,z'')]\neq-IJ[M_{K\cup\{h\}},F_{K\cup\{h\}}(z',w,z'')],
\end{equation*}
but for $J_1=...=J_n=J=I$ we would have
\begin{equation*}
    (-1)^{|K\cup\{h\}|}F_{K\cup\{h\}}(z',w,z'')\neq(-1)^{|K\cup\{h\}|}F_{K\cup\{h\}}(z',w,z''),
\end{equation*}
which is false. Let us represent $\{K\in\mathcal{P}(n)\mid h\notin K\}=\{P\sqcup Q\mid P\in\mathcal{P}(h-1), \ Q\subset\{h+1,...,n\}\}$. 
    Suppose $P\neq\emptyset$, then $\forall Q\subset\{h+1,...,n\})$, (\ref{eq senza somma}) becomes
    \begin{equation*}
        [L_{(P\cup\{h\}\cup Q)},F_{P\cup\{h\}\cup Q}(z',w,z'')]=-IJ[M_{(P\cup\{h\}\cup Q)},F_{P\cup\{h\}\cup Q}(z',w,z'')]
    \end{equation*}
    and this implies that $F_{P\cup\{h\}\cup Q}\equiv0$.
    Indeed, if $F_{P\cup\{h\}\cup Q}\neq0$, the previous equation would reduce to $J_PI=-IJJ_PJ$ which does not hold for every choice of $I,J,J_P$.
    \item[$\Leftarrow)$] Vice versa, suppose $F$ takes the form
    \begin{equation*}
        F= \sum_{K\in\mathcal{P}(n),h\notin K}e_KF_K+e_h \sum_{Q\subset\{h+1,...,n\}}e_QF_{\{h\}\cup Q}.
    \end{equation*}
    Following the notation above, it holds
    \begin{equation*}
        f^y_h(x)= \sum_{K\in\mathcal{P}(n),h\notin K}[J_K,F_K(z',w,z'')]+I \sum_{Q\subset\{h+1,...,n\}}[J_Q,F_{\{h\}\cup Q}(z',w,z'')].
    \end{equation*}
    Thus, consider the function $G^y_h=G^y_{1,h}+iG^y_{2,h}$, with
    \begin{equation*}
        G^y_{1,h}(w):= \sum_{K\in\mathcal{P}(n),h\notin K}[J_K,F_K(z',w,z'')]
    ,\quad
        G^y_{2,h}(w):= \sum_{Q\subset\{h+1,...,n\}}[J_Q,F_{\{h\}\cup Q}(z',w,z'')].
    \end{equation*}
    $G^y_h$ is a one-variable stem function, indeed,
    \begin{equation*}
    \begin{split}
        G^y_h(\overline{w})&= \sum_{K\in\mathcal{P}(n),h\notin K}[J_K,F_K(z',\overline{w},z'')]+i \sum_{Q\subset\{h+1,...,n\}}[J_Q,F_{\{h\}\cup Q}(z',\overline{w},z'')]\\
        &= \sum_{K\in\mathcal{P}(n),h\notin K}[J_K,F_K(z',w,z'')]-i \sum_{Q\subset\{h+1,...,n\}}[J_Q,F_{\{h\}\cup Q}(z',w,z'')]=\overline{G^y_h(w)}
    \end{split}
    \end{equation*}
    and $f^y_h=\mathcal{I}(G^y_h)$, by construction, so $f\in\mathcal{S}_h(\Omega_D)$.
\end{enumerate}
\end{proof}

\begin{oss}
\label{remark structure slice function wrt xh}
By the previous proof, we can better understand the set $\mathcal{S}_H(\Omega_D)$: let $f=\mathcal{I}(F)\in\mathcal{S}_H(\Omega_D)$, then for any $x\in\Omega_D$ with $x=(\phi_{J_1}\times...\times \phi_{J_n})(z)$, $f(x)$ takes the form
\begin{equation*}
    f(x)= \sum_{K\in H^c}\left[J_K,F_K(z)\right]+ \sum_{h\in H}J_h \sum_{Q\subset\{h+1,...,n\}\setminus H}\left[J_Q,F_{\{h\}\cup Q}(z)\right].
\end{equation*}
Moreover, for any $h\in H$ and any $y=(y_1,...,y_n)$, $f^y_h$ is a one-variable slice function, induced by the stem function $G^y_h$, with components
\begin{equation}
\label{Equazione componenti stem parziale}
        G^y_{1,h}(w):= \sum_{K\in\mathcal{P}(n),h\notin K}[J_K,F_K(z',w,z'')]
    ,\qquad
        G^y_{2,h}(w):= \sum_{Q\subset\{h+1,...,n\}}[J_Q,F_{\{h\}\cup Q}(z',w,z'')],
    \end{equation}
    where $z=(z',z_h,z")$ and $y=(\phi_{J_1}\times...\times\phi_{J_n})(z)$.
\end{oss}

Now, we deal with partial slice regularity.
\begin{prop}
\label{Proposition characterization h slice regularity}
For every $H\in\mathcal{P}(n)$ it holds
\begin{equation*}
    \mathcal{S}\mathcal{R}_H(\Omega_D)=\mathcal{S}_H(\Omega_D)\bigcap_{h\in H}\ker(\partial/\partial x_h^c).
\end{equation*}
\end{prop}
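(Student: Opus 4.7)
The plan is to reduce to the case $H=\{h\}$, since both sides of the claimed identity commute with intersections over $h\in H$. Once this is done, I would prove the equality $\mathcal{S}\mathcal{R}_h(\Omega_D)=\mathcal{S}_h(\Omega_D)\cap\ker(\partial/\partial x_h^c)$ by establishing the two inclusions separately.

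For the inclusion $(\subseteq)$, the containment $\mathcal{S}\mathcal{R}_h(\Omega_D)\subseteq\mathcal{S}_h(\Omega_D)$ is immediate, since each $f^y_h$ being a one-variable slice regular function is in particular a one-variable slice function. By Proposition \ref{characterization sliceness in xh}, the stem function $F$ of such an $f$ then has the reduced form described there, and by the subsequent Remark each $f^y_h$ is induced by the one-variable stem function $G^y_h=G^y_{1,h}+e_1G^y_{2,h}$ of (\ref{Equazione componenti stem parziale}). The one-variable Cauchy--Riemann equations for $G^y_h$ (which express its slice regularity) therefore hold at every $y$ and every choice of $J_1,\dots,J_n\in\mathbb{S}_\mathbb{H}$. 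Expanding them via (\ref{Equazione componenti stem parziale}) and splitting each $K$ with $h\notin K$ as $K=P\sqcup Q$ with $P\subset\{1,\dots,h-1\}$ and $Q\subset\{h+1,\dots,n\}$, I would extract conditions on the $F_K$ by varying the $J_i$'s independently, exactly as in the proof of Proposition \ref{characterization sliceness in xh}. The right-hand side only carries $J_Q$-factors, so independence from $J_P$ for $P\neq\emptyset$ forces $\partial F_{P\cup Q}/\partial\alpha_h=\partial F_{P\cup Q}/\partial\beta_h=0$, while the remaining $P=\emptyset$ coefficient comparison yields exactly the Cauchy--Riemann equations (\ref{Cauchy-Riemann equations}) for the pairs $(F_Q,F_{\{h\}\cup Q})$. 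Since $F_{P\cup\{h\}\cup Q}\equiv0$ already by $f\in\mathcal{S}_h$, these combined conditions are precisely the vanishing of $\overline{\partial}_hF$.

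For the reverse inclusion $(\supseteq)$, if $f\in\mathcal{S}_h(\Omega_D)$ satisfies $\overline{\partial}_hF\equiv0$, then $F$ has the reduced form of Proposition \ref{characterization sliceness in xh}, and (\ref{Cauchy-Riemann equations}) applied to $K=P\sqcup Q$ unpacks to $\partial F_Q/\partial\alpha_h=\partial F_{\{h\}\cup Q}/\partial\beta_h$ and $\partial F_Q/\partial\beta_h=-\partial F_{\{h\}\cup Q}/\partial\alpha_h$ when $P=\emptyset$, while (since the corresponding $F_{P\cup\{h\}\cup Q}$ vanish) $\partial F_{P\cup Q}/\partial\alpha_h=\partial F_{P\cup Q}/\partial\beta_h=0$ when $P\neq\emptyset$. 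Substituting these into the term-by-term $\alpha_h,\beta_h$ differentiation of (\ref{Equazione componenti stem parziale}) immediately verifies the one-variable Cauchy--Riemann equations for $G^y_h$ at every $y$, so each $f^y_h$ is slice regular and $f\in\mathcal{S}\mathcal{R}_h(\Omega_D)$.

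The main technical obstacle I expect is the noncommutative coefficient-extraction step in the first direction. However, this is essentially the same argument already carried out in the proof of Proposition \ref{characterization sliceness in xh} (distinguishing $P=\emptyset$ from $P\neq\emptyset$ by testing against independent choices of $J_i$ for $i<h$), so it should import with only notational adjustments, the only difference being that it is applied to the first partial derivatives $\partial F_K/\partial\alpha_h,\partial F_K/\partial\beta_h$ rather than to $F_K$ itself.
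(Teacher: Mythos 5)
Your proposal is correct and follows essentially the same route as the paper: reduce to $H=\{h\}$, use Proposition \ref{characterization sliceness in xh} and Remark \ref{remark structure slice function wrt xh} to express each $f^y_h$ via the stem function $G^y_h$ of (\ref{Equazione componenti stem parziale}), and then match the one-variable Cauchy--Riemann equations for $G^y_h$ against the $h$-holomorphicity conditions (\ref{Cauchy-Riemann equations}) by splitting $K=P\sqcup Q$ and extracting coefficients through independent choices of the $J_i$, with the $P\neq\emptyset$ terms forced to have vanishing $\alpha_h,\beta_h$-derivatives and the $P=\emptyset$ terms giving the genuine Cauchy--Riemann pairs. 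This is exactly the paper's argument, including the reuse of the coefficient-extraction step from Proposition \ref{characterization sliceness in xh} applied to the partial derivatives of the components.
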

\begin{proof}
Since $\mathcal{S}\mathcal{R}_H(\Omega_D):=\bigcap_{h\in H}\mathcal{S}\mathcal{R}_h(\Omega_D)$, it is sufficient to assume $H=\{h\}$ for some $h=1,...,n$.
\begin{enumerate}
        \item [$\subset)$] By definition, $\mathcal{S}\mathcal{R}_h(\Omega_D)\subset\mathcal{S}_h(\Omega_D)$, so let $f=\mathcal{I}(F)$, with
\begin{equation}
\label{form of stem xh slice}
    F= \sum_{K\in\mathcal{P}(n),h\notin K}e_KF_K+e_h \sum_{Q\subset\{h+1,...,n\}}e_QF_{\{h\}\cup Q},
\end{equation}
thanks to \eqref{equation characterization sliceness in xh}.
For any $y\in\Omega_D$, $f^y_h$ is induced by the stem function $G^y_h=G^y_{1,h}+iG^y_{2,h}$, with
\begin{equation*}
        G^y_{1,h}(w):= \sum_{K\in\mathcal{P}(n),h\notin K}[J_K,F_K(z',w,z'')]
,\quad
        G^y_{2,h}(w):= \sum_{Q\subset\{h+1,...,n\}}[J_Q,F_{\{h\}\cup Q}(z',w,z'')].
\end{equation*}
         By definition, $f\in\mathcal{S}\mathcal{R}_h(\Omega_D)$ means that $\forall y\in\Omega_D$, the stem function $G^y_h$ is holomorphic, i.e. recalling \eqref{Equazione componenti stem parziale} it must hold that for every $z=(z',z_h,z")\in D$, $w\in D_h(z)$ and $\forall J_j\in\mathbb{S}_\mathbb{H}$ that
        \begin{equation*}
            \left\{\begin{array}{l}
             \sum_{P,Q}[J_{P\cup Q},\partial_{\alpha_h}F_{P\cup Q}(z',w,z'')]= \sum_{Q}[J_Q,\partial_{\beta_h}F_{\{h\}\cup Q}(z',w,z'')]  \\
               \sum_{P,Q}[J_{P\cup Q},\partial_{\beta_h}F_{P\cup Q}(z',w,z'')]=- \sum_{Q}[J_Q,\partial_{\alpha_h}F_{\{h\}\cup Q}(z',w,z'')],
        \end{array}\right.
        \end{equation*}
        where in the above sums $P\in\mathcal{P}(h-1)$ and $Q\subset\{h+1,...,n\}$.
        Now, since that system is true for every choice of imaginary unit $J_j$, proceeding as in the proof of Proposition \ref{characterization sliceness in xh} we can deduce that an equivalence between each term of the sum holds. Let any $Q\subset\{h+1,...,n\}$: if $P\neq\emptyset$, equality can sussist only if $\partial_{\alpha_h}F_{P\cup Q}=\partial_{\beta_h}F_{P\cup Q}=0$ and this trivially proves that the components $F_{P\cup Q}$ satisfies (\ref{Cauchy-Riemann equations}), since $F_{P\cup\{h\}\cup Q}=0$, by (\ref{equation characterization sliceness in xh}). Otherwise, let $P=\emptyset$, then the previous system becomes
        \begin{equation*}
            \left\{\begin{array}{l}
                \partial_{\alpha_h}F_Q=\partial_{\beta_h}F_{\{h\}\cup Q}  \\
                  \partial_{\beta_h}F_Q=-\partial_{\alpha_h}F_{\{h\}\cup Q}
            \end{array}\right.
        \end{equation*}
and (\ref{Cauchy-Riemann equations}) are satisfied too. This proves that $F$ is $h$-holomorphic, which means that $f\in\ker(\partial/\partial x_h^c)$.
\item[$\supset)$] Suppose $f\in\mathcal{S}_h(\Omega_D)\cap\ker(\partial /\partial x_h^c)$, then $F$ satisfies (\ref{form of stem xh slice}) and (\ref{Cauchy-Riemann equations}).
         As in the proof of Proposition \ref{characterization sliceness in xh}, represent $K=P\sqcup Q$, with $P\in\mathcal{P}(h-1)$ and $Q\subset\{h+1,...,n\}$.
         Since, by (\ref{form of stem xh slice}), $F_{P\cup\{h\}\cup Q}\equiv0$, $\forall P\in\mathcal{P}(h-1)\setminus\{\emptyset\}$, $\forall Q\subset\{h+1,...,n\}$ the $h$-holomorphicity of $F$ reduces to the following conditions:
        \begin{equation}
            \label{system h holomorphicity}
            \left\{\begin{array}{l}
                \partial_{\alpha_h}F_{P\cup Q}=\partial_{\beta_h}F_{P\cup Q}=0 \\
                \partial_{\alpha_h}F_Q=\partial_{\beta_h}F_{\{h\}\cup Q}  \\
                  \partial_{\beta_h}F_Q=\partial_{\alpha_h}F_{\{h\}\cup Q}.
            \end{array}\right.
        \end{equation}
        On the other hand, $f\in\mathcal{S}\mathcal{R}_h(\Omega_D)$ if and only if $G^y_h$ is a slice regular function $\forall y\in\Omega_D$, which means that $\partial_\alpha G^y_{1,h}=\partial_\beta G^y_{2,h}$ and $\partial_\beta G^y_{1,h}=-\partial_\alpha G^y_{2,h}$, which, by definition of $G^y_h$ is equivalent to
        \begin{equation*}
            \left\{\begin{array}{l}
            \partial_{\alpha_h} \sum_{K\in\mathcal{P}(n),h\notin K}[J_K,F_K(z)]=\partial_{\beta_h} \sum_{Q\subset\{h+1,...,n\}}[J_Q,F_{\{h\}\cup Q}(z)]  \\
              \partial_{\beta_h} \sum_{K\in\mathcal{P}(n),h\notin K}[J_K,F_K(z)]=-\partial_{\alpha_h} \sum_{Q\subset\{h+1,...,n\}}[J_Q,F_{\{h\}\cup Q}(z)],
        \end{array}\right.
        \end{equation*}
        where $y=(\phi_{J_1}\times...\times\phi_{J_n})(z)$, $z=(z_1,...,z_n)$, $z_j=\alpha_j+i\beta_j$.
        Let us prove the first row of the system. Using the first two equation of (\ref{system h holomorphicity}) and splitting $K=P\sqcup Q$, we can write the left side as
        \begin{equation*}
            \begin{split}
                &\partial_{\alpha_h} \sum_{P\in\mathcal{P}(h-1),Q\subset\{h+1,...,n\}}[J_{P\cup Q},F_{P\cup Q}(z',w,z'')]\\
                &= \sum_{P\in\mathcal{P}(h-1),Q\subset\{h+1,...,n\}}[J_{P\cup Q},\partial_{\alpha_h}F_{P\cup Q}(z',w,z'')]= \sum_{Q\subset\{h+1,...,n\}}[J_Q,\partial_{\alpha_h}F_Q(z',w,z'')]\\
                &= \sum_{Q\subset\{h+1,...,n\}}[J_Q,\partial_{\beta_h}F_{\{h\}\cup Q}(z',w,z'')]=\partial_{\beta_h} \sum_{Q\subset\{h+1,...,n\}}[J_Q,F_{\{h\}\cup Q}(z',w,z'')].
            \end{split}
        \end{equation*}
        The second equation is proved in the same way.
    \end{enumerate}
    \end{proof}
    
    
\begin{cor}
Let $f\in\mathcal{S}\mathcal{R}(\Omega_D)$ and $H\in\mathcal{P}(n)$. Then $f\in\mathcal{S}_H(\Omega_D)$ if and only if $ f\in\mathcal{S}\mathcal{R}_H(\Omega_D).$
\end{cor}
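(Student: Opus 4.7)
The plan is to deduce the corollary directly from Proposition \ref{Proposition characterization h slice regularity}, which characterizes partial slice regularity as partial sliceness combined with vanishing of the relevant Cauchy--Riemann--type operator.

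The direction ($\Leftarrow$) is immediate from the definitions: $\mathcal{SR}_H(\Omega_D) \subset \mathcal{S}_H(\Omega_D)$ for every $H \in \mathcal{P}(n)$, applied to $H = \{h\}$.

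For ($\Rightarrow$), I would argue as follows. Assume $f \in \mathcal{SR}(\Omega_D) \cap \mathcal{S}_h(\Omega_D)$. Global slice regularity, by the characterization recalled in Section 2 (cf.\ \cite[Proposition 3.13]{Several}), is equivalent to $\partial f / \partial x_k^c = 0$ for every $k = 1,\dots,n$; in particular $f \in \ker(\partial/\partial x_h^c)$. Combining this with the assumption $f \in \mathcal{S}_h(\Omega_D)$ and invoking Proposition \ref{Proposition characterization h slice regularity} with $H = \{h\}$, which gives
\begin{equation*}
\mathcal{SR}_h(\Omega_D) = \mathcal{S}_h(\Omega_D) \cap \ker(\partial/\partial x_h^c),
\end{equation*}
we conclude $f \in \mathcal{SR}_h(\Omega_D)$.

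There is really no obstacle here since all the nontrivial work has been carried out in Proposition \ref{Proposition characterization h slice regularity}. The only point that deserves a brief word in the write-up is stressing that the global hypothesis $f \in \mathcal{SR}(\Omega_D)$ supplies exactly the missing ingredient (the vanishing of $\partial/\partial x_h^c$) beyond partial sliceness, thereby collapsing $\mathcal{S}_h$ and $\mathcal{SR}_h$ within the class $\mathcal{SR}(\Omega_D)$.
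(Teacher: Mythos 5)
Your proof is correct and follows exactly the paper's own argument: the reverse implication is the trivial inclusion $\mathcal{S}\mathcal{R}_h(\Omega_D)\subset\mathcal{S}_h(\Omega_D)$, and the forward implication combines $\partial f/\partial x_h^c=0$ (from global slice regularity via \cite[Proposition 3.13]{Several}) with the characterization $\mathcal{S}\mathcal{R}_h(\Omega_D)=\mathcal{S}_h(\Omega_D)\cap\ker(\partial/\partial x_h^c)$ of Proposition \ref{Proposition characterization h slice regularity}. Nothing to add.
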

\begin{proof}
The "if" part is trivial. Viceversa, note that by \cite[Proposition 3.13]{Several}, $f\in\mathcal{S}\mathcal{R}(\Omega_D)$, implies $\partial f/\partial x_h^c=0$, $\forall h=1,...,n$, hence $\mathcal{S}_H(\Omega_D)\cap\mathcal{S}\mathcal{R}(\Omega_D)\subset\mathcal{S}_H(\Omega_D)\bigcap_{h\in H}\ker(\partial/\partial x_h^c)=\mathcal{S}\mathcal{R}_H(\Omega_D)$, by Proposition \ref{Proposition characterization h slice regularity}.
\end{proof}

Finally, we characterize circularity.
\begin{prop}
\label{proposition characterization partial circularity}
For every $H\in\mathcal{P}(n)$ it holds
\begin{equation}
\label{equazione circolarita}
    \mathcal{S}_{c,H}(\Omega_D)=\left\{\mathcal{I}(F): F\in Stem(D), F= \sum_{K\subset H^c}e_KF_K\right\}.
\end{equation}
In particular, $\mathcal{S}_{c,H}(\Omega_D)\subset\mathcal{S}_H(\Omega_D)$.
\end{prop}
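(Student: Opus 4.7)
The plan is to mirror the structure of the previous two propositions: reduce to the case $H = \{h\}$ using $\mathcal{S}_{c,H}(\Omega_D) = \bigcap_{h \in H} \mathcal{S}_{c,h}(\Omega_D)$, prove both inclusions for $H = \{h\}$, and then iterate. The forward direction proceeds in two stages. First, I would observe that circularity w.r.t. $x_h$ already implies sliceness w.r.t. $x_h$: if $f \in \mathcal{S}_{c,h}(\Omega_D)$, then for every $y \in \Omega_D$ the restriction $f^y_h$ is constant on each sphere $\mathbb{S}_{\alpha,\beta}\subset\Omega_{D,h}(y)$, so setting $G^y_{1,h}(\alpha+i\beta) := f^y_h(\alpha+I\beta)$ (the right-hand side being independent of $I$) and $G^y_{2,h} \equiv 0$ produces a one-variable stem function inducing $f^y_h$. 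Equivalently, the representation formula (\ref{representation formula}) holds trivially because both sides equal the constant value on the sphere. Hence Proposition \ref{characterization sliceness in xh} applies and the stem function $F$ has the form (\ref{equation characterization sliceness in xh}), so $F_{P \cup \{h\} \cup Q} \equiv 0$ whenever $P \in \mathcal{P}(h-1) \setminus \{\emptyset\}$ and $Q \subset \{h+1,...,n\}$.

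The second stage is to rule out also the components $F_{\{h\} \cup Q}$ with $Q \subset \{h+1,...,n\}$. By Remark \ref{remark structure slice function wrt xh}, the one-variable stem function $G^y_h$ inducing $f^y_h$ has second component
\begin{equation*}
G^y_{2,h}(w) = \sum_{Q \subset \{h+1,...,n\}} [J_Q, F_{\{h\} \cup Q}(z',w,z'')].
\end{equation*}
In one quaternionic variable, the slice function $\mathcal{I}(G_1 + e_1 G_2)$ is constant on each sphere if and only if $G_2 \equiv 0$ (immediate from $\mathcal{I}(G)(\alpha + J\beta) = G_1(z) + J G_2(z)$). Imposing this vanishing for every $y \in \Omega_D$, i.e. for every choice of imaginary units $J_{h+1},...,J_n \in \mathbb{S}_\mathbb{H}$, and appealing to the same coefficient-comparison argument used in the $\Rightarrow$ direction of Proposition \ref{characterization sliceness in xh}, one concludes $F_{\{h\} \cup Q}(z',w,z'') \equiv 0$ for every $Q$ and every $(z',w,z'') \in D$. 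Iterating the conclusion over all $h \in H$ gives $F_K \equiv 0$ whenever $K \cap H \neq \emptyset$, which is exactly (\ref{equazione circolarita}).

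The reverse direction is straightforward: if $F = \sum_{K \subset H^c} e_K F_K$, then for every $x = (\phi_{J_1}\times...\times\phi_{J_n})(z) \in \Omega_D$ we have $f(x) = \sum_{K \subset H^c} [J_K, F_K(z)]$, and none of the $J_K$ involves $J_h$ for $h \in H$ (since $h \notin K$), so $f$ is independent of $J_h$ and hence circular w.r.t. each $x_h$, $h \in H$. The final inclusion $\mathcal{S}_{c,H}(\Omega_D) \subset \mathcal{S}_H(\Omega_D)$ is then immediate by comparing (\ref{equazione circolarita}) with (\ref{equazione caratterizzazione sliceness in H}). The only non-routine point is the coefficient-vanishing step at the end of the forward direction; this is absorbed by the technique already deployed in the previous proofs, or, alternatively, by invoking the injectivity of $\mathcal{I}$ from \cite[Proposition 2.12]{Several} applied to the auxiliary stem function $\sum_Q e_Q F_{\{h\} \cup Q}(z',w,\cdot)$ in the remaining $n-h$ variables.
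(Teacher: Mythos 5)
Your proof is correct and follows essentially the same route as the paper: reduce to $H=\{h\}$, expand $f^y_h$ in terms of the stem components, and use independence of the imaginary unit in the $h$-th slot to force every $F_K$ with $h\in K$ to vanish. The only difference is organizational --- the paper kills all such components in a single coefficient comparison, whereas you first pass through Proposition \ref{characterization sliceness in xh} to eliminate the $F_{P\cup\{h\}\cup Q}$ with $P\neq\emptyset$ and then dispose of the $F_{\{h\}\cup Q}$ separately; both steps are sound.
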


\begin{proof}
Since $\mathcal{S}_{c,H}(\Omega_D)=\bigcap_{h\in H}\mathcal{S}_{c,h}(\Omega_D)$, it is sufficient to assume $H=\{h\}$ for some $h=1,...,n$.
Let any $y=(y_1,...,y_n)\in\Omega_D$, with $y_j:=\alpha_j+J_j\beta_j$, $z_j:=\alpha_j+i\beta_j$, set $z'=(z_1,...,z_{h-1})$ and $z"=(z_{h+1},...,z_n)$. $f\in\mathcal{S}_{c,h}(\Omega_D)$ if for every $x=a+Ib$, $f^y_h(x)$ does not depend on $I$. Let $w:=a+ib$, $M_p:=J_p$ if $p\neq h$ and $M_h=I$, then
    \begin{equation*}
        f^y_h(x)= \sum_{K\in\mathcal{P}(n),h\notin K}[J_K,F_K(z',w,z")]+ \sum_{K\in\mathcal{P}(n),h\notin K}[M_{K\cup\{h\}},F_{K\cup\{h\}}(z',w,z")].
    \end{equation*}
    It is clear that $f^y_h(a+Ib)$ does not depend on $I$ if and only if $F_{K\cup\{h\}}=0$ for every $K\in\mathcal{P}(n)$. Finally, comparing (\ref{equazione caratterizzazione sliceness in H}) and (\ref{equazione circolarita}) we see that $\mathcal{S}_{c,H}(\Omega_D)\subset\mathcal{S}_H(\Omega_D)$.
\end{proof}
Note that functions of the form \eqref{equazione circolarita} were introduced in \cite{Several} as $H^c$-reduced slice functions, hence we can say that $f\in\mathcal{S}_{c,H}(\Omega_D)$ if and only if it is $H^c$-reduced. It is easy now to prove the following property.

\begin{cor}
\label{Corollary subalgebra}
    For every $H\in\mathcal{P}(n)$, the set $\mathcal{S}_{c,H}(\Omega_D)$ is a real subalgebra of $(\mathcal{S}(\Omega_D),\odot)$.
\end{cor}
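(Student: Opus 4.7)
The plan is to use the characterization of $\mathcal{S}_{c,H}(\Omega_D)$ given in Proposition \ref{proposition characterization partial circularity}, which identifies these slice functions with those whose inducing stem functions have only components indexed by subsets of $H^c$. Since the map $\mathcal{I}: Stem(D) \to \mathcal{S}(\Omega_D)$ is injective and $\mathbb{R}$-linear, and since the slice product is defined via $f \odot g = \mathcal{I}(F \otimes G)$, the whole question reduces to showing that the subset $\mathcal{V}_H := \{F \in Stem(D) : F = \sum_{K \subset H^c} e_K F_K\}$ is a real subalgebra of $(Stem(D), \otimes)$.

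First I would check that $\mathcal{V}_H$ is an $\mathbb{R}$-vector subspace of $Stem(D)$: closure under sum and real scalar multiplication is immediate from the characterization, since adding two stem functions or scaling by a real number adds/scales the components $F_K$ componentwise without creating any new nonzero component. The unit $F \equiv e_\emptyset$ (corresponding to the constant slice function $1$) lies in $\mathcal{V}_H$ because $\emptyset \subset H^c$.

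The key step is closure under $\otimes$. Given $F = \sum_{H' \subset H^c} e_{H'} F_{H'}$ and $G = \sum_{K' \subset H^c} e_{K'} G_{K'}$ in $\mathcal{V}_H$, the definition of the $\Delta$-product yields
\begin{equation*}
    F \otimes G = \sum_{H', K' \subset H^c} (-1)^{|H' \cap K'|} e_{H' \Delta K'} F_{H'}(z) G_{K'}(z).
\end{equation*}
Since $H' \Delta K' \subset H' \cup K' \subset H^c$ whenever $H', K' \subset H^c$, every basis vector appearing in $F \otimes G$ is of the form $e_L$ with $L \subset H^c$. Hence $F \otimes G \in \mathcal{V}_H$, and the induced slice function $f \odot g = \mathcal{I}(F \otimes G)$ lies in $\mathcal{S}_{c,H}(\Omega_D)$.

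I do not foresee a genuine obstacle: the whole argument is a direct verification using the combinatorial fact that the symmetric difference of subsets of $H^c$ stays in $H^c$, together with the stem function characterization already established. The only minor care needed is to invoke Proposition \ref{proposition characterization partial circularity} at both ends (to translate membership in $\mathcal{S}_{c,H}$ into a condition on stem components, and then back again after taking the product).
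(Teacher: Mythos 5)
Your proposal is correct and follows essentially the same route as the paper: both reduce the claim via Proposition \ref{proposition characterization partial circularity} to closure of the stem functions supported on $\mathcal{P}(H^c)$ under $\otimes$, using the fact that $K\Delta T\subset K\cup T\subset H^c$. The only difference is that you spell out the vector-space closure and the unit explicitly, which the paper leaves implicit.
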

\begin{proof}
    We need to show that if $f,g\in\mathcal{S}_{c,H}(\Omega_D)$, then $f\odot g\in\mathcal{S}_{c,H}(\Omega_D)$. Let $f=\mathcal{I}(F)$ and $g\in\mathcal{I}(G)$, with $F=\sum_{K\subset H^c}e_KF_K$ and $G=\sum_{T\subset H^c}e_TG_T$, by \eqref{equazione circolarita}. Then
    \begin{equation*}
        F\otimes G=\sum_{K,T\subset H^c}(-1)^{|K\cap T|}e_{K\Delta T}F_KG_T,
    \end{equation*}
    with $K\Delta T=(K\cup T)\setminus (K\cap T)\subset K\cup T\subset H^c$. Then, again \eqref{equazione circolarita} implies $f\odot g\in\mathcal{S}_{c,H}(\Omega_D)$.
\end{proof}

Note that the previous result does not apply to $\mathcal{S}_H(\Omega_D)$, nor $\mathcal{S}\mathcal{R}_H(\Omega_D)$, unless for $\mathcal{S}_1(\Omega_D)=\mathcal{S}(\Omega_D)$ and $\mathcal{S}\mathcal{R}_1(\Omega_D)$. Indeed, for example, $x_1,x_2\in\mathcal{S}\mathcal{R}_2(\Omega_D)$, while $x_1\odot x_2\notin \mathcal{S}_2(\Omega_D)$.

Slice regularity and circularity are hardly compatible.

\begin{prop}
Let $f\in\mathcal{S}_{c,h}(\Omega_D)\cap\mathcal{S}\mathcal{R}_h(\Omega_D)$. Then $f$ is locally constant w.r.t. $x_h$.
\end{prop}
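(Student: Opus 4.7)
The plan is to combine Propositions \ref{Proposition characterization h slice regularity} and \ref{proposition characterization partial circularity} at the level of stem functions and read off that every component of $F = \mathcal{I}^{-1}(f)$ is locally constant in the pair $(\alpha_h,\beta_h)$.

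First, by Proposition \ref{proposition characterization partial circularity} applied to $H=\{h\}$, the circularity hypothesis $f\in\mathcal{S}_{c,h}(\Omega_D)$ gives
\begin{equation*}
    F=\sum_{K\subset\{h\}^c}e_KF_K,
\end{equation*}
that is, $F_{K\cup\{h\}}\equiv 0$ for every $K\in\mathcal{P}(n)$ with $h\notin K$. In particular, this already implies $f\in\mathcal{S}_h(\Omega_D)$, so by Proposition \ref{Proposition characterization h slice regularity} the assumption $f\in\mathcal{S}\mathcal{R}_h(\Omega_D)$ is equivalent to $\overline{\partial}_h F=0$, i.e. the Cauchy--Riemann equations \eqref{Cauchy-Riemann equations} hold for every $K\in\mathcal{P}(n)$ with $h\notin K$.

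Second, I would substitute $F_{K\cup\{h\}}\equiv 0$ into \eqref{Cauchy-Riemann equations}. The system collapses to
\begin{equation*}
    \frac{\partial F_K}{\partial\alpha_h}=0,\qquad \frac{\partial F_K}{\partial\beta_h}=0,\qquad \forall K\in\mathcal{P}(n),\ h\notin K,
\end{equation*}
so every nonzero component of $F$ is locally constant in $(\alpha_h,\beta_h)$ on the conjugation-invariant open set $D$.

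Finally, fix $y\in\Omega_D$ and look at the restriction $f^y_h$ on $\Omega_{D,h}(y)$. By Remark \ref{remark structure slice function wrt xh}, $f^y_h=\mathcal{I}(G^y_h)$ where the components $G^y_{1,h}$ and $G^y_{2,h}$ are built from the $F_K$ with $h\notin K$ (the second one vanishes identically here because each $F_{\{h\}\cup Q}\equiv 0$). Since each such $F_K$ is locally constant in $(\alpha_h,\beta_h)$, the stem function $G^y_h$ is locally constant on $D_h(z)$, hence $f^y_h$ is locally constant on $\Omega_{D,h}(y)$, which is the required conclusion. No serious obstacle is expected: the whole content is that circularity kills the $e_{\{h\}\cup Q}$-components of $F$ while the Cauchy--Riemann equations then force the surviving components to be independent of $(\alpha_h,\beta_h)$.
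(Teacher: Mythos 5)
Your proposal is correct and follows essentially the same route as the paper: circularity kills the components $F_{K\cup\{h\}}$ via Proposition \ref{proposition characterization partial circularity}, and then the Cauchy--Riemann equations \eqref{Cauchy-Riemann equations} (available through Proposition \ref{Proposition characterization h slice regularity}) force $\partial F_K/\partial\alpha_h=\partial F_K/\partial\beta_h=0$, so $f$ is locally constant in $x_h$. Your closing step passing through $G^y_h$ is just a slightly more explicit way of drawing the same conclusion.
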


\begin{proof}
Let $x_h=a_h+J_hb_h$ and $f=\mathcal{I}(F)$. Since $f\in\mathcal{S}_{c,h}(\Omega_D)$, $f$ does not depend on $J_h$ and $F_{K\cup\{h\}}=0$ for any $K\in\mathcal{P}(n)$. Moreover, $f\in\mathcal{S}\mathcal{R}_h(\Omega_D)\subset\ker(\partial/\partial x_h^c)$, by Proposition \ref{Proposition characterization h slice regularity}, so by (\ref{Cauchy-Riemann equations})
\begin{equation*}
    \dfrac{\partial F_K}{\partial\alpha_h}=\dfrac{\partial F_{K\cup\{h\}}}{\partial\beta_h}=0=\dfrac{\partial F_{K\cup\{h\}}}{\partial\alpha_h}=-\dfrac{\partial F_K}{\partial\beta_h}.
\end{equation*}
Thus, $f$ does not depend neither on $\alpha_h$ and $\beta_h$ and so it is locally constant w.r.t. $x_h$.
\end{proof}

\begin{ex}
Consider the following polynomial function
$f:\mathbb{H}^3\rightarrow\mathbb{H},\  f(x_1,x_2,x_3):=x_1x_3+x_2x_3^2k,$
which happens to be a slice regular function, \cite[Proposition 3.14]{Several}. We claim that $f\in\mathcal{S}\mathcal{R}_2(\Omega_D)$.
Let us explicit the components of the stem function inducing $f$: let $z=(z_1,z_2,z_3)\in\mathbb{C}^3$, with $z_j:=\alpha_j+i\beta_j$, then $f=\mathcal{I}(F)$, with $F=\sum_{K\in\mathcal{P}(3)}e_KF_K$, where
\begin{align*}
&F_\emptyset(z)=\alpha_1\alpha_3+\alpha_2(\alpha_3^2-\beta_3^2)k,\qquad F_{\{1\}}(z)=\beta_1\alpha_3,\qquad F_{\{2\}}(z)=\beta_2(\alpha_3^2-\beta_3^2)k,\\
&F_{\{3\}}(z)=\alpha_1\beta_3+2\alpha_2\alpha_3\beta_3k,\quad F_{\{1,2\}}(z)=0,\qquad F_{\{1,3\}}(z)=\beta_1\beta_3,\qquad F_{\{2,3\}}(z)=2\beta_2\alpha_3\beta_3k,\\
& F_{\{1,2,3\}}(z)=0.
\end{align*}    
Thus, $F$ has the structure required by (\ref{equation characterization sliceness in xh}) for $h=2$, so $f\in\mathcal{S}_2(\Omega_D)$. Moreover, for $K=\emptyset,\{1\},\{3\},\{1,3\}$ it holds $$\frac{\partial F_K}{\partial \alpha_2}=\frac{\partial F_{K\cup\{2\}}}{\partial \beta_2},\qquad\frac{\partial F_K}{\partial \beta_2}=-\frac{\partial F_{K\cup\{2\}}}{\partial \alpha_2},$$
so $f\in\ker(\partial/\partial x_2^c)$ and so $f\in\mathcal{S}\mathcal{R}_2(\Omega_D)=\mathcal{S}_2(\Omega_D)\cap\ker(\partial/\partial x_2^c)$.

We could have proven the claim by definition, through Remark \ref{remark structure slice function wrt xh}, which explicitly gives us the stem function that induces the corresponding one variable slice function, for every choice of $y$. Fix any $y=(y_1,y_2,y_3)\in\mathbb{H}^3$, then $f^y_2$ is a slice regular function, induced by the holomorphic stem function $G^y_2=G^y_{1,2}+iG^y_{2,2}$, with
\begin{equation*}
    G^y_{1,2}(\alpha+i\beta)=y_1y_3+\alpha y_3^2k,\qquad G^y_{2,2}(\alpha+i\beta)=\beta y_3^2k.
\end{equation*}
\end{ex}

\section{Partial spherical derivatives}
For $h\in\{1,...,n\}$, define $\mathbb{R}_h:=\{(x_1,...,x_n)\mid x_h\in\mathbb{R}\}$ and for $H\in\mathcal{P}(n)$, $\mathbb{R}_H:=\bigcup_{h\in H}\mathbb{R}_h$.
\begin{defn}
    Let $F:D\subset\mathbb{C}^n\rightarrow\mathbb{H}\otimes\mathbb{R}^{2^n}$ be a stem function. Define for $h=1,...,n$ and for $H=\{h_1,...,h_p\}\in\mathcal{P}(n)$
\begin{equation*}
        \begin{split}
            F^\circ_h(z):&= \sum_{K\in\mathcal{P}(n),h\notin K}e_KF_K(z),\\
        F^\circ_H(z):&=\sum_{K\subset H^c}e_KF_K(z)=\left(\dots(F^\circ_{h_1})^\circ_{h_2}\dots\right)^\circ_{h_p}(z)
        \end{split}
    \end{equation*}
    and 
    \begin{align}
        F'_h(z):&=\beta_h^{-1} \sum_{K\in\mathcal{P}(n),h\notin K}e_KF_{K\cup\{h\}}(z),&\text{if }z\in D\setminus\mathbb{R}_h\\
        F'_H(z):&=\beta_H^{-1}\sum_{K\in H^c}e_KF_{K\cup H}(z)=\left(\dots(F'_{h_1})'_{h_2}\dots\right)'_{h_p}(z), &\text{if }z\in D\setminus\mathbb{R}_H,
    \end{align}
    where $z=(z_1,...,z_n)$ with $z_j=\alpha_j+i\beta_j$
    and $\beta_H=\prod_{h\in H}\beta_h$.
    
    \end{defn}

    
\begin{lem}
 For every $H\in\mathcal{P}(n)$, $F^\circ_H$ and $F'_H$ are well defined stem functions on $D$ and $D\setminus\mathbb{R}_H$, respectively.
\end{lem}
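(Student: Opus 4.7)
The plan is to verify two things for each operator: that the iterated definition actually yields the closed-form expression (and thus does not depend on the order of the indices $h_1,\dots,h_p$), and that this resulting component function is a stem function. I will handle $F^\circ_H$ first, then $F'_H$, since the latter requires a slightly more delicate sign tracking.

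For $F^\circ_H$, I would begin by proving the base case $p=1$: the operator $F \mapsto F^\circ_h$ simply zeroes out every component $F_K$ with $h \in K$, so the components of $F^\circ_h$ are $(F^\circ_h)_K = F_K$ if $h \notin K$ and $0$ otherwise. Checking the stem identity \eqref{defining property of stem} componentwise is then trivial: when $h \in K$, both sides vanish; when $h \notin K$, the identity for $(F^\circ_h)_K$ reduces to the stem identity of $F_K$ (including the case $h'=h$, where $|K\cap\{h\}|=0$ and we need $F_K(\overline{z}^h) = F_K(z)$, which indeed holds since $h \notin K$). For the iteration, since each $^\circ_{h_i}$ simply deletes components whose index meets $\{h_i\}$, the composition $(\dots(F^\circ_{h_1})^\circ_{h_2}\dots)^\circ_{h_p}$ retains exactly the components indexed by $K$ with $K \cap H = \emptyset$, i.e. $K \subset H^c$, which matches the closed form and is manifestly symmetric in $h_1,\dots,h_p$.

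For $F'_H$ the base case $F \mapsto F'_h$ has components $(F'_h)_K = \beta_h^{-1} F_{K \cup \{h\}}$ for $h \notin K$ and $0$ otherwise, well-defined on $D \setminus \mathbb{R}_h$ since there $\beta_h \neq 0$. The stem check for $h' \neq h$ is immediate: $\beta_h$ is unaffected by conjugating $z_{h'}$, and since $h \notin K$ implies $(K \cup \{h\}) \cap \{h'\} = K \cap \{h'\}$, the stem identity of $F$ transfers directly. The delicate case is $h' = h$: under $z_h \mapsto \overline{z}_h$ one has $\beta_h \mapsto -\beta_h$ and simultaneously $F_{K \cup \{h\}}(\overline{z}^h) = -F_{K \cup \{h\}}(z)$ (since $h \in K \cup \{h\}$), so the two signs cancel and we get $(F'_h)_K(\overline{z}^h) = (F'_h)_K(z)$, which is the required stem identity as $|K \cap \{h\}| = 0$.

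The iteration for $F'_H$ then proceeds by induction on $|H|$: applying $'_{h_2}$ to $F'_{h_1}$ picks out components indexed by $L \cup \{h_2\}$ with $h_1 \notin L \cup \{h_2\}$, giving $(F'_{h_1})'_{h_2}(z) = \beta_{h_1}^{-1}\beta_{h_2}^{-1} \sum_{L \subset \{1,\dots,n\}\setminus\{h_1,h_2\}} e_L F_{L \cup \{h_1,h_2\}}(z)$, which is symmetric in $h_1, h_2$ and agrees with the closed form $F'_{\{h_1,h_2\}}$. Iterating, the domains $D \setminus \mathbb{R}_{h_i}$ combine as $D \setminus \mathbb{R}_H$, the $\beta_{h_i}^{-1}$ factors accumulate to $\beta_H^{-1}$, and the closed formula $F'_H = \beta_H^{-1} \sum_{K \subset H^c} e_K F_{K \cup H}$ follows, independently of order. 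The main subtlety, and the only place where something nontrivial happens, is the sign cancellation in the $h'=h$ case for $F'_h$; once that is in hand the rest is essentially bookkeeping.
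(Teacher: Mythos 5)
Your proposal is correct and follows essentially the same route as the paper: reduce to a single index after verifying that the two-index iterations commute, then check the stem identity \eqref{defining property of stem} componentwise, with the only nontrivial point being the cancellation between $\beta_h\mapsto-\beta_h$ and $F_{K\cup\{h\}}(\overline{z}^h)=-F_{K\cup\{h\}}(z)$ in the case $m=h$. Your treatment of the iteration and of the domain $D\setminus\mathbb{R}_H$ is slightly more explicit than the paper's, but the substance is identical.
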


\begin{proof}
Firstly, let us prove that $F^\circ_H$ and $F'_H$ are well defined, i.e. their definition does not depend on the order of $H$. Indeed, for any $i,j=1,...,n$ it holds
    \begin{equation*}
        (F'_i)'_j(z)= \sum_{K\in\mathcal{P}(n),i,j\notin K}e_K\beta^{-1}_j\beta_i^{-1}F_{K\cup\{i,j\}}(z)=(F'_j)'_i(z)
    \end{equation*}
    and analogously for $(F^\circ_i)^\circ_j$.
Without loss of generality, assume $H=\{h\}$, for some $h=1,...,n$. 
$F^\circ_h$ is trivially a stem function because its non zero components are the same of $F$. Let us explicit $F'_h=\sum_{K\in\mathcal{P}(n)}e_KG_K$, with
\begin{equation*}
    G_K(z)=\left\{\begin{array}{ll}
         \beta_h^{-1}F_{K\cup\{h\}}\qquad&\text{ if }h\notin K  \\
          0 \qquad&\text{ if }h\in K,
    \end{array}\right.
\end{equation*}
we will show that every component of $F'_h$ satisfies \eqref{defining property of stem}. Let us consider only the components $G_K$, with $h\notin K$, otherwise \eqref{defining property of stem} is trivial. For any $m\neq h$ we have
\begin{equation*}
    G_K(\overline{z}^m)= 
    \beta_h^{-1}F_{K\cup\{h\}}(\overline{z}^m)=\beta_h^{-1}(-1)^{|K\cap\{m\}|}F_{K\cup\{h\}}(z)=(-1)^{|K\cap\{m\}|}G_K(z),
\end{equation*}
while, for $m=h$
\begin{equation*}
    G_K(\overline{z}^h)=
    (-\beta_h^{-1})F_{K\cup\{h\}}(\overline{z}^h)=(-\beta_h^{-1})(-F_{K\cup\{h\}}(z))= \beta_h^{-1}F_{K\cup\{h\}}(z)=G_K(z).
\end{equation*}
\end{proof}
The previous Lemma allows to make the following

\begin{defn}
    Let $f=\mathcal{I}(F)\in\mathcal{S}(\Omega_D)$. For $h\in\{1,...,n\}$, we define its \emph{spherical $x_h$-value and $x_h$-derivative} rispectively as
    \begin{equation*}
       f^\circ_{s, h}:=\mathcal{I}(F^\circ_{h}), \qquad f'_{s, h}:=\mathcal{I}(F'_h).
    \end{equation*}
Analogously, for $H\in\mathcal{P}(n)$, define
    \begin{equation*}
        f^\circ_{s,H}:=\mathcal{I}(F^\circ_H),\qquad f'_{s,H}:=\mathcal{I}(F'_H).
    \end{equation*}
Note that $f^\circ_{s,H}\in\mathcal{S}(\Omega_D)$, while $f'_{s,H}\in\mathcal{S}(\Omega_{D_H})$, where $\Omega_{D_H}:=\Omega_D\setminus\mathbb{R}_H$.
\end{defn}

We stress that the terms spherical value and spherical derivatives have been already used in \cite[\S 2.3]{Several} in the context of slice functions of several quaternionic variables, but they refer to different objects. With respect to our definition, spherical values and derivatives are more related to the truncated spherical derivatives $\mathcal{D}_\epsilon(f)$ \cite[Definition 2.24]{Several}, where for $h\in\{1,...,n\}$ and $\epsilon:\{1,...,h\}\to\{0,1\}$, $\mathcal{D}_\epsilon(f):=\mathcal{D}_{x_h}^{\epsilon(h)}\cdots\mathcal{D}_{x_1}^{\epsilon(1)}(f)$, with $\mathcal{D}_{x_l}^{1}(f)=f'_{s,l}$ and $\mathcal{D}_{x_l}^{0}(f)=f'_{s,l}$. Indeed, it holds $\mathcal{D}_\epsilon(f)=(f'_{s,H})^\circ_{s,K}$, with $H=\epsilon^{-1}(1)$ and $K=\epsilon^{-1}(0)$.

The following proposition justifies the names given to $f^\circ_{s,h}$ and $f'_{s,h}$, comparing them to their one-variable analogues (\S\ref{definition spherical operators one variable}). Note that we have to assume $f\in\mathcal{S}_h(\Omega_D)$, in order for the spherical derivative to agree with it.

\begin{prop}
Let $f\in\mathcal{S}(\Omega_D)$ and $h=1,...,n$. Then it holds
\begin{enumerate}
    \item $\forall x=(x_1,...,x_n)\in\Omega_D$
    \begin{equation*}
        f^\circ_{s, h}(x)=\dfrac{1}{2}\left(f(x)+f\left(\overline{x}^h\right)\right)=(f^x_h)^\circ_s(x_h);
    \end{equation*}
    \item if $f\in\mathcal{S}_h(\Omega_D)$,  $\forall x\in\Omega_{D}\setminus\mathbb{R}_h$  
    \begin{equation}
    \label{equazione derivata sferica parziale coincide con unidimensionale}
        f'_{s, h}(x)=\left[2\operatorname{Im}(x_h)\right]^{-1}(f(x)-f(\overline{x}^h))=(f^x_h)'_{s}(x_h).
    \end{equation}
        In particular, if we assume $f\in\mathcal{S}^1(\Omega_D)$, then we can extend the definition of $f'_{s,h}$ to all $\Omega_D$, thanks to \cite[Proposition 7, (2)]{SRFonAA}.

\end{enumerate} 
\end{prop}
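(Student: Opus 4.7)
The plan is to reduce both identities to direct symbolic computations at the level of the stem function $F=\sum_K e_K F_K$ inducing $f$, exploiting the parity relation \eqref{defining property of stem} and the explicit form of $\mathcal{I}(F)$. For $x=(\phi_{J_1}\times\cdots\times\phi_{J_n})(z)\in\Omega_D$, the point $\overline{x}^h$ corresponds to replacing $z_h$ by $\overline{z_h}$ while keeping the imaginary units $J_1,\dots,J_n$ fixed, and \eqref{defining property of stem} yields
\begin{equation*}
    f(\overline{x}^h)=\sum_{K\in\mathcal{P}(n)}[J_K,F_K(\overline{z}^h)] = \sum_{K:h\notin K}[J_K,F_K(z)]-\sum_{K:h\in K}[J_K,F_K(z)].
\end{equation*}
Thus the half-sum and half-difference with $f(x)$ isolate exactly the ``even'' and ``odd'' parts of $f$ with respect to the $h$-th slot.

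For part (1), adding and halving gives $\tfrac12(f(x)+f(\overline{x}^h))=\sum_{K:h\notin K}[J_K,F_K(z)]=\mathcal{I}(F^\circ_h)(x)=f^\circ_{s,h}(x)$, which is the first equality. The second equality is essentially tautological: by the one-variable definition of spherical value recalled in \S\ref{subsection one variable theory}, $(f^x_h)^\circ_s(x_h)=\tfrac12(f^x_h(x_h)+f^x_h(\overline{x_h}))$, and by construction of the restriction $f^x_h(\overline{x_h})=f(\overline{x}^h)$. No hypothesis beyond $f\in\mathcal{S}(\Omega_D)$ is used here.

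For part (2), subtracting gives $f(x)-f(\overline{x}^h)=2\sum_{K:h\in K}[J_K,F_K(z)]$, and this is where the assumption $f\in\mathcal{S}_h(\Omega_D)$ enters decisively. By Proposition \ref{characterization sliceness in xh}, the only possibly non-zero components $F_K$ with $h\in K$ are those of the form $K=\{h\}\cup Q$ with $Q\subset\{h+1,\dots,n\}$; since $h$ is then the minimum index of $K$, we have $J_K=J_hJ_Q$ and I can factor $J_h$ to the left:
\begin{equation*}
    f(x)-f(\overline{x}^h)=2J_h\sum_{Q\subset\{h+1,\dots,n\}}[J_Q,F_{\{h\}\cup Q}(z)].
\end{equation*}
Since $\operatorname{Im}(x_h)=J_h\beta_h$ and $J_h^{-1}=-J_h$, we have $[2\operatorname{Im}(x_h)]^{-1}=-J_h/(2\beta_h)$, so multiplying on the left collapses $-J_h\cdot J_h$ to $1$ and produces $\beta_h^{-1}\sum_Q[J_Q,F_{\{h\}\cup Q}(z)]$. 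By the defining form of $F'_h$ (and using once more Proposition \ref{characterization sliceness in xh}, so that only $K\subset\{h+1,\dots,n\}$ contribute to $\mathcal{I}(F'_h)$), this coincides with $\mathcal{I}(F'_h)(x)=f'_{s,h}(x)$. The identification with $(f^x_h)'_s(x_h)$ is then the one-variable formula $f'_s(x)=\tfrac12[\operatorname{Im}(x)]^{-1}(f(x)-f(\overline{x}))$ applied to the restriction $f^x_h$, which is a genuine one-variable slice function precisely by the hypothesis $f\in\mathcal{S}_h(\Omega_D)$.

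The main conceptual point, and really the only non-mechanical step, is the factorization of $J_h$ in part (2): without the sliceness hypothesis, components $F_K$ with $h\in K$ but $h$ not the minimum index of $K$ could survive, and then $J_h$ would appear sandwiched between non-commuting quaternions inside $J_K$, blocking both the factorization and the subsequent cancellation against $[2\operatorname{Im}(x_h)]^{-1}$. This is exactly why the partial sliceness characterization proved in \S\ref{Sezione proprietà sliceness rispetto a variabile} is the tool that makes the several-variable spherical derivative agree with the one-variable one.
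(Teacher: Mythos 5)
Your proposal is correct and follows essentially the same route as the paper: both arguments expand $f(x)\pm f(\overline{x}^h)$ via the stem function and the parity relation \eqref{defining property of stem}, and both invoke the characterization of $\mathcal{S}_h(\Omega_D)$ in part (2) so that only components $F_{\{h\}\cup Q}$ with $Q\subset\{h+1,\dots,n\}$ survive, allowing $J_h$ to be factored on the left and cancelled against $[2\operatorname{Im}(x_h)]^{-1}$. Your closing remark on why the factorization fails without the sliceness hypothesis is a correct and worthwhile observation that the paper leaves implicit.
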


\begin{proof}
Let $f=\mathcal{I}(F)$, with $F=\sum_{K\in\mathcal{P}(n)}e_KF_K$. Then for any $z\in D$ and $x=(\phi_{J_1}\times...\times\phi_{J_n})(z)$ we get
\begin{equation*}
    \begin{split}
&f(x)+f(\overline{x}^h)= \sum_{K\in\mathcal{P}(n)}\left([J_K,F_K(z)]+[J_K,F_K(\overline{z}^h)]\right)\\
&= \sum_{K\in\mathcal{P}(n)}\left([J_K,F_K(z)]+(-1)^{|K\cap\{h\}|}[J_K,F_K(z)]\right)= \sum_{K\in\mathcal{P}(n),h\notin K}\left(2[J_K,F_K(z)]\right)=2f^\circ_{s, h}(x).
    \end{split}
\end{equation*}
Now, assume $f\in\mathcal{S}_h(\Omega_D)$, then by (\ref{equation characterization sliceness in xh}) $$f(x)= \sum_{h\notin K}\left[J_K,F_K(z)\right]+J_h \sum_{Q\subset\{h+1,...,n\}}\left[J_Q,F_{\{h\}\cup Q}(z)\right]$$ and so $$f'_{s, h}(x)=\sum_{Q\subset\{h+1,...,n\}}[J_Q,\beta_h^{-1}F_{\{h\}\cup Q}(z)].$$ On the other hand, let $x=(\phi_{J_1}\times...\times\phi_{J_n})(z)$, then by (\ref{defining property of stem}) we have
\begin{equation*}
    \begin{split}
        f(x)-f\left(\overline{x}^h\right)&= \sum_{K\in\mathcal{P}(n),h\notin K}[J_K,F_K(z)]+J_h\sum_{Q\subset\{h+1,...,n\}}[J_Q,F_{\{h\}\cup Q}(z)]+\\
        &\qquad- \sum_{K\in\mathcal{P}(n),h\notin K}[J_K,F_K(\overline{z}^h)]-J_h\sum_{Q\subset\{h+1,...,n\}}[J_Q,F_{\{h\}\cup Q}(\overline{z}^h)]\\
        &=2J_h\sum_{Q\subset\{h+1,...,n\}}[J_Q,F_{\{h\}\cup Q}(z)],
    \end{split}
\end{equation*}
from which
\begin{equation*}
    \begin{split}
        &\left[2\operatorname{Im}(x_h)\right]^{-1}\left(f(x)-f\left(\overline{x}^h\right)\right)=\left[2J_h\beta_h\right]^{-1}\left(2J_h\sum_{Q\subset\{h+1,...,n\}}[J_Q,F_{\{h\}\cup Q}(z)]\right)\\
        &= \sum_{Q\subset\{h+1,...,n\}}[J_Q,\beta_h^{-1}F_{\{h\}\cup Q}(z)]=f'_{s,h}(x).
    \end{split}
\end{equation*}
\end{proof}
We extend from \cite{Harmonicity} properties of the spherical derivative of one-variable slice regular functions to several variables.

\begin{lem}
\label{lemma derivata sferica operatore crf e Laplaciano}
If $f\in\mathcal{S}\mathcal{R}_h(\Omega_D)$, the following hold:
\begin{enumerate}
    \item $\overline{\partial}_{x_h}f=-2f'_{s,h}$;
    \item $\Delta_h f=-4\dfrac{\partial f'_{s,h}}{\partial x_h}=-2\partial_{x_h}(f'_{s,h})$.
\end{enumerate}
\end{lem}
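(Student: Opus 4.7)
My plan is to reduce both claims to the one-variable identities established in \cite{Harmonicity}. The key observation is that, for a fixed $y=(y_1,\dots,y_n)\in\Omega_D$, both $\overline{\partial}_{x_h}$ and $\Delta_h$ involve only the four real coordinates of $x_h$, so they agree with the one-variable Cauchy--Riemann--Fueter operator and the Laplacian applied to the restriction $f^y_h$. Since $f\in\mathcal{S}\mathcal{R}_h(\Omega_D)$, Proposition \ref{Proposition characterization h slice regularity} ensures that $f^y_h$ is a one-variable slice regular function, so the one-variable theory applies slice by slice.

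For part (1), I would write
$$(\overline{\partial}_{x_h} f)(y) = \overline{\partial}_{CRF}(f^y_h)(y_h) = -(f^y_h)'_s(y_h) = -f'_{s,h}(y),$$
where the first equality is the aforementioned observation, the second is the one-variable version of the identity proved in \cite{Harmonicity}, and the third is equation (\ref{equazione derivata sferica parziale coincide con unidimensionale}) of the preceding proposition. Since $y$ is arbitrary, this proves the claim.

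For part (2), the second equality follows directly from the factorization (\ref{equazione fattorizzazione laplaciano}) combined with part (1):
$$\Delta_h f = 4\partial_{x_h}(\overline{\partial}_{x_h} f) = -4\partial_{x_h}(f'_{s,h}).$$
The first equality then reduces to showing that, on $f'_{s,h}$, the slice partial derivative $\partial/\partial x_h$ agrees with the Cauchy--Riemann--Fueter operator $\partial_{x_h}$. I would argue this as follows: by definition $F'_h = \beta_h^{-1}\sum_{h\notin K} e_K F_{K\cup\{h\}}$, so the inducing stem function is supported on index sets disjoint from $\{h\}$, and Proposition \ref{proposition characterization partial circularity} gives $f'_{s,h}\in\mathcal{S}_{c,h}(\Omega_{D_h})$. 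A straightforward chain-rule computation in the coordinates $x_h = \alpha_h + i\tilde\beta_h + j\tilde\gamma_h + k\tilde\delta_h$ (with $\beta_h = |\operatorname{Im}(x_h)|$ and $J_h = \operatorname{Im}(x_h)/\beta_h$) shows that, for any function $g$ circular in $x_h$, both $\partial_{x_h} g$ and $\partial g/\partial x_h$ collapse to $\tfrac{1}{2}(\partial_{\alpha_h} g - J_h\partial_{\beta_h} g)$: the former via the axial symmetry in the Fueter coordinates, the latter via the definition involving $\mathcal{J}_h$ on a stem function with no components indexed by sets containing $h$.

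The main subtle step is this last identification between the two derivatives on circular-in-$x_h$ functions; the rest of the proof is a clean reduction to the one-variable case handled in \cite{Harmonicity}.
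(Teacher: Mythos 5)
Your part (1) is exactly the paper's argument: restrict to $f^y_h$, observe that $\overline{\partial}_{x_h}$ only involves the real coordinates of $x_h$ so it computes $\overline{\partial}_{CRF}(f^y_h)$, and invoke the one-variable identity from \cite{Harmonicity} together with \eqref{equazione derivata sferica parziale coincide con unidimensionale}. For part (2) you genuinely diverge: the paper reduces to one variable again and cites the one-variable statements $\Delta(f^y_h)=-4\,\partial(f^y_h)'_s/\partial x=-4\,\partial_{CRF}(f^y_h)'_s$, whereas you obtain $\Delta_h f=-4\partial_{x_h}(f'_{s,h})$ directly from the factorization \eqref{equazione fattorizzazione laplaciano} combined with part (1), and then prove the remaining identification $\partial_{x_h}(f'_{s,h})=\partial f'_{s,h}/\partial x_h$ by hand. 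Your route is more self-contained, since it does not re-import the one-variable Fueter-type computation, at the price of having to establish that the two derivatives agree on $f'_{s,h}$ --- a step the paper absorbs into the cited one-variable results (and which, incidentally, it also needs implicitly when it equates $\partial(f^y_h)'_s/\partial x$ with $\partial f'_{s,h}/\partial x_h$).

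On that step there is one imprecision to repair. You assert that for \emph{any} $g$ circular w.r.t.\ $x_h$ both $\partial_{x_h}g$ and $\partial g/\partial x_h$ collapse to $\tfrac12(\partial_{\alpha_h}g-J_h\partial_{\beta_h}g)$. The Cauchy--Riemann--Fueter half is fine, but the slice-derivative half is false in general: $\mathcal{J}_h$ sends $e_K$ to $e_{K\cup\{h\}}$, which induces $[J_{K\cup\{h\}},\cdot\,]$ with $J_h$ inserted \emph{in increasing position} inside the ordered product $J_{k_1}\cdots J_{k_p}$, not multiplied on the left; when some $k_i<h$ this differs from $J_h[J_K,\cdot\,]$ by noncommutativity (take $n=2$, $h=2$, $G=e_{\{1\}}G_{\{1\}}$). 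The instance you actually need is saved by Proposition \ref{proposizione proprieta derivata sferica}(2): since $f\in\mathcal{S}\mathcal{R}_h(\Omega_D)\subset\mathcal{S}_h(\Omega_D)$, the stem $F'_h$ is supported on sets $Q\subset\{h+1,\dots,n\}$, so $h=\min(Q\cup\{h\})$ and $[J_{Q\cup\{h\}},a]=J_h[J_Q,a]$. With that substitution your argument closes correctly.
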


\begin{proof}
\begin{enumerate}
    \item Note that $\forall y=(y_1,...,y_n)\in\Omega_D$, $f^y_h\in\mathcal{S}\mathcal{R}(\Omega_{D,h}(y))$, then we can apply (\ref{equazione derivata sferica parziale coincide con unidimensionale}) and \cite[Corollary 6.2, (a)]{Harmonicity} to get
\begin{equation*}
 \overline{\partial}_{x_h}f(y)=\overline{\partial}_{CRF}(f^y_h)(y_h)=-2(f^y_h)'_s(y_h)=-2f'_{s,h}(y).
\end{equation*}
\item By (\ref{equazione derivata sferica parziale coincide con unidimensionale}), \cite[Corollary 6.2, (c), Theorem 6.3, (c)]{Harmonicity} and \cite[Theorem 2.2 (ii)]{Global} we have
\begin{equation*}
    \begin{split}
        \Delta_hf(y)=\Delta(f^y_h)(y_h)&=-4\dfrac{\partial (f^y_h)'_s}{\partial x}(y_h)=-4\theta(f^y_h)'_s(y_h)=-2\partial_{CRF}(f^y_h)'_s(y_h)=-2\partial_{x_h}f'_{s,h}(y),
    \end{split}
\end{equation*}
where $(\theta f)(x)=\frac{1}{2}\left(\frac{\partial f}{\partial \alpha}(x)+\frac{\operatorname{Im}(x)}{|\operatorname{Im}(x)|^2}(\beta\frac{\partial f}{\partial\beta}(x)+\gamma\frac{\partial f}{\partial\gamma}(x)+\delta\frac{\partial f}{\partial\delta}(x))\right)$ satisfies $\theta f=\frac{\partial f}{\partial x}$ and $2\theta f'_s=\partial_{CRF}f'_s$  for any slice function $f$.\footnote{In \cite{Harmonicity} the factor 1/2 is omitted in the definition of $\theta$.}
\end{enumerate}
\end{proof}

The next proposition presents some properties of partial spherical values and derivatives peculiar of the several variables setting.
\begin{prop}
\label{proposizione proprieta derivata sferica}
Let $f\in\mathcal{S}(\Omega_D)$, $h\in\{1,...,n\}$ and $H\in\mathcal{P}(n)$, with $p=\min H^c$ if $H\neq\{1,...,n\}$. Then
\begin{enumerate}
    \item $f^\circ_{s,H}\in\mathcal{S}_{c,H}(\Omega_D)\cap\mathcal{S}_p(\Omega_D)$ and $f'_{s,H}\in\mathcal{S}_{c,H}(\Omega_{D_H})\cap\mathcal{S}_p(\Omega_{D_H})$;
    \item if $f\in\mathcal{S}_h(\Omega_{D})$, $f'_{s, h}\in\mathcal{S}_{h+1}(\Omega_{D_H})\cap\mathcal{S}_{c,\{1,...,h\}}(\Omega_{D_H})$;
    \item if $f\in\mathcal{S}_{c,h}(\Omega_D)$, $f^\circ_{s,h}=f$ and $f'_{s,h}=0$;
    \item if $h\in H$, $H\cap\{1,...,h-1\}\neq\emptyset$ and $f\in\mathcal{S}_h(\Omega_D)$, then $f'_{s,H}=0$;
    \item $(f^\circ_{s,h})^\circ_{s,h}=f^\circ_{s,h}$ and $(f'_{s,h})'_{s,h}=0$.
\end{enumerate}
\end{prop}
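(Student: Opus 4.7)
The plan is to prove each of the five parts by unwinding the definitions of $F^\circ_H$ and $F'_H$ at the stem function level and then invoking the characterizations of $\mathcal{S}_H$, $\mathcal{S}_{c,H}$ in Propositions~\ref{characterization sliceness in xh} and \ref{proposition characterization partial circularity}. No analytic input is needed: everything reduces to tracking which components $F_K$ survive the operators $(\cdot)^\circ_H$, $(\cdot)'_H$ and comparing the support to the support conditions in those characterizations.

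For part (1), by construction $F^\circ_H=\sum_{K\subset H^c}e_KF_K$ and $F'_H=\beta_H^{-1}\sum_{K\subset H^c}e_K F_{K\cup H}$, so both stem functions have support only on indices $K\subset H^c$. Proposition~\ref{proposition characterization partial circularity} immediately gives $f^\circ_{s,H},f'_{s,H}\in\mathcal{S}_{c,H}$. For the sliceness in $x_p$ with $p=\min H^c$, the point is that $\{1,\dots,p-1\}\subset H$ by minimality of $p$; so if $P\in\mathcal{P}(p-1)$ is non-empty then $P\cap H\neq\emptyset$, and hence $P\cup\{p\}\cup Q$ is never a subset of $H^c$. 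Thus the corresponding components of $F^\circ_H$ and $F'_H$ vanish, and Proposition~\ref{characterization sliceness in xh} gives membership in $\mathcal{S}_p$.

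Part (2) is an analogous index check: if $f\in\mathcal{S}_h(\Omega_D)$ then by \eqref{equation characterization sliceness in xh} the only non-zero components $F_{K\cup\{h\}}$ are those with $K\subset\{h+1,\dots,n\}$, so $F'_h$ is supported on indices $K\subset\{h+1,\dots,n\}=\{1,\dots,h\}^c$, which by Proposition~\ref{proposition characterization partial circularity} means $f'_{s,h}\in\mathcal{S}_{c,\{1,\dots,h\}}$; applying part (1) (or directly the same minimality argument with $p=h+1$) gives the sliceness in $x_{h+1}$. Part (3) is immediate from the characterization of $\mathcal{S}_{c,h}$: circularity forces $F_K=0$ whenever $h\in K$, so $F^\circ_h=F$ and $F'_h=0$. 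Part (5) is the observation that $F^\circ_h$ has no component containing $h$, so a second application of $(\cdot)^\circ_h$ leaves it unchanged and a second application of $(\cdot)'_h$ picks up only components of $F^\circ_h$ indexed by sets containing $h$, all of which vanish; the same reasoning kills $(F'_h)'_h$.

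The only mildly subtle part is (4). Here I would argue: fix $m\in H\cap\{1,\dots,h-1\}$ and consider any $K\subset H^c$. Splitting $K\cup H$ along $h$ as $P\sqcup\{h\}\sqcup Q$ with $P\in\mathcal{P}(h-1)$ and $Q\subset\{h+1,\dots,n\}$, we have $m\in H\subset K\cup H$ and $m<h$, so $m\in P$ and in particular $P\neq\emptyset$. The hypothesis $f\in\mathcal{S}_h$ then gives $F_{K\cup H}=F_{P\cup\{h\}\cup Q}=0$ via Proposition~\ref{characterization sliceness in xh}, so every summand of $F'_H$ vanishes and $f'_{s,H}=0$. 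The main obstacle in writing this cleanly is simply keeping the index bookkeeping consistent with the conventions of the earlier characterizations; once one realizes that all five claims are literally support statements about the multi-index $K$, the proofs are uniform.
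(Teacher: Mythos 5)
Your proof is correct and follows essentially the same route as the paper: every claim is reduced to a support condition on the stem components and then settled by Propositions \ref{characterization sliceness in xh} and \ref{proposition characterization partial circularity}. The only cosmetic difference is in part (4), where you argue directly on the index $K\cup H$ rather than composing parts (2) and (3) as the paper does; both are valid (and note that in part (2) it is indeed your ``minimality argument with $p=h+1$'', not a literal application of part (1), that yields sliceness w.r.t.\ $x_{h+1}$).
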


\begin{proof}
\begin{enumerate}
    \item  If $f=\mathcal{I}(F)$, by definition $f^\circ_{s,h}=\sum_{K\subset H^c}[J_K,F_K]$, hence by Proposition \ref{proposition characterization partial circularity}, $f^\circ_{s,H}\in\mathcal{S}_{c,H}(\Omega_D)$. Moreover, we can write it as 
    \begin{equation*}
    f^\circ_{s,h}=\sum_{K\subset (H\cup p)^c}[J_K,F_K]+J_p\sum_{K\subset (H\cup p)^c}[J_K,F_{K\cup p}],
\end{equation*}
   so $f^\circ_{s,h}\in\mathcal{S}_p(\Omega_D)$. In the same way one can prove that $f'_{s,H}\in\mathcal{S}_{c,H}(\Omega_{D_H})\cap\mathcal{S}_p(\Omega_{D_H})$.
    \item By Proposition \ref{characterization sliceness in xh}, $F$ takes the form
\begin{equation*}
    F= \sum_{K\in\mathcal{P}(n),h\notin K}e_KF_K+e_{\{h\}} \sum_{Q\subset\{h+1,...,n\}}e_QF_{\{h\}\cup Q},
\end{equation*}
hence,
\begin{equation*}
    F'_h=\beta_h^{-1} \sum_{Q\subset\{h+1,...,n\}}e_QF_{\{h\}\cup Q}.
\end{equation*}
This shows that $f'_{s, h}\in\mathcal{S}_{c,\{1,...,h\}}(\Omega_{D_h})$, by Proposition \ref{proposition characterization partial circularity}. Finally, by Proposition \ref{characterization sliceness in xh}, $f'_{s,h}\in\mathcal{S}_{h+1}(\Omega_{D_h})$.
\item By Proposition \ref{proposition characterization partial circularity}, $F=\sum_{K\in\mathcal{P}(n),h\notin K}e_KF_K$, so $F'_h=0$ and $F^\circ_h=F$.
\item Let $i\in H\cap\{1,...,h-1\}\neq\emptyset$, since $f\in\mathcal{S}_h(\Omega_D)$, by (2) $f'_{s,h}\in\mathcal{S}_{c,i}(\Omega_{D_i})$ and by (3) $(f'_{s,h})'_{s,i}=0$. In particular, $f'_{s,H}=0$.
\item It follows from (1) and (3).
\end{enumerate}
\end{proof}

Partial spherical derivatives do not affect regularity in other variables.

\begin{prop}
Let $f\in\mathcal{S}^1(\Omega_D)$. Suppose that $f\in\ker(\partial/\partial x_t^c)$ for some $t=1,...,n$, then $f'_{s, h}\in\ker(\partial/\partial x_t^c)$, $\forall h\neq t$.
\end{prop}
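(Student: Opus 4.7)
The plan is to translate both the hypothesis and the conclusion into the component-wise Cauchy-Riemann equations \eqref{Cauchy-Riemann equations} for stem functions, and then verify that passing from $F$ to $F'_h$ preserves these equations when the variable is different.

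Concretely, write $f=\mathcal{I}(F)$ with $F=\sum_{K\in\mathcal{P}(n)}e_KF_K$. The hypothesis $f\in\ker(\partial/\partial x_t^c)$ means $\overline{\partial}_tF\equiv 0$, which by \eqref{Cauchy-Riemann equations} is equivalent to
\begin{equation*}
\frac{\partial F_K}{\partial\alpha_t}=\frac{\partial F_{K\cup\{t\}}}{\partial\beta_t},\qquad
\frac{\partial F_K}{\partial\beta_t}=-\frac{\partial F_{K\cup\{t\}}}{\partial\alpha_t},\qquad\forall K\in\mathcal{P}(n),\ t\notin K.
\end{equation*}
By definition, $F'_h=\sum_{K}e_KG_K$ where $G_K=\beta_h^{-1}F_{K\cup\{h\}}$ if $h\notin K$ and $G_K=0$ if $h\in K$. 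The goal is to show $\overline{\partial}_t(F'_h)\equiv 0$ on $D\setminus\mathbb{R}_h$, i.e.\ that the components $G_K$ satisfy the same system with $t$ in place.

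The verification splits into two cases. If $h\in K$, then $h\in K\cup\{t\}$ too (since $h\neq t$), so both $G_K$ and $G_{K\cup\{t\}}$ vanish, and both CR identities are trivially $0=0$. If $h\notin K$ (and $t\notin K$), then since $h\neq t$ the scalar $\beta_h^{-1}$ is independent of $\alpha_t,\beta_t$, so differentiation in $\alpha_t$ or $\beta_t$ commutes with multiplication by $\beta_h^{-1}$. Moreover $t\notin K\cup\{h\}$, so the CR equations for $F$ apply to the pair $(F_{K\cup\{h\}},F_{K\cup\{h,t\}})$, giving
\begin{equation*}
\frac{\partial G_K}{\partial\alpha_t}=\beta_h^{-1}\frac{\partial F_{K\cup\{h\}}}{\partial\alpha_t}=\beta_h^{-1}\frac{\partial F_{K\cup\{h,t\}}}{\partial\beta_t}=\frac{\partial G_{K\cup\{t\}}}{\partial\beta_t},
\end{equation*}
and analogously for the second equation. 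Hence $F'_h$ is $t$-holomorphic on $D\setminus\mathbb{R}_h$, so $f'_{s,h}\in\ker(\partial/\partial x_t^c)$.

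I do not expect a serious obstacle: the whole content is that the scalar $\beta_h^{-1}$ and the index shift $K\mapsto K\cup\{h\}$ both commute with the $t$-variable CR operator when $t\neq h$. The only point that needs care is handling the singular set $\mathbb{R}_h$ (where $\beta_h^{-1}$ blows up) by restricting to $\Omega_{D_h}$ from the start, and noting that the components with $h\in K$ must be treated as the zero function so that the $K\leftrightarrow K\cup\{t\}$ pairing in the CR system still closes up correctly.
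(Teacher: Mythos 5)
Your proposal is correct and follows essentially the same route as the paper: write $F'_h=\sum_K e_K G_K$ with $G_K=\beta_h^{-1}F_{K\cup\{h\}}$ for $h\notin K$, use that $\beta_h^{-1}$ is constant in $\alpha_t,\beta_t$ (as $h\neq t$), and transfer the Cauchy--Riemann equations of $F$ at the index $K\cup\{h\}$ to $G_K$. Your explicit treatment of the trivial case $h\in K$ (where both $G_K$ and $G_{K\cup\{t\}}$ vanish) is a small completeness point the paper leaves implicit, but the argument is the same.
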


\begin{proof}
Let $f=\mathcal{I}(F)$, with $F=\sum_{K\in\mathcal{P}(n)}e_KF_K$, so $f'_{s, h}=\mathcal{I}(F'_h)$, with $F'_h=\sum_{K\in\mathcal{P}(n)}e_KG_K$, $G_K=0$, if $h\in K$ and $G_K=\beta_h^{-1}F_{K\cup\{h\}}$, if $h\notin K$. Let $K\in\mathcal{P}(n)$, with $h,t\notin K$, then by the regularity of $F$ it holds
\begin{equation*}
    \left\{\begin{array}{l}
         \dfrac{\partial G_K}{\partial\alpha_t}=\dfrac{\partial \beta_h^{-1}F_{K\cup\{h\}}}{\partial\alpha_t}=\beta_h^{-1}\dfrac{\partial F_{K\cup\{h\}}}{\partial\alpha_t}=\beta_h^{-1}\dfrac{\partial F_{K\cup\{h\}\cup\{t\}}}{\partial\beta_t}=\dfrac{\partial G_{K\cup\{t\}}}{\partial\beta_t}\\
        \dfrac{\partial G_K}{\partial\beta_t}=\dfrac{\partial \beta_h^{-1}F_{K\cup\{h\}}}{\partial\beta_t}=\beta_h^{-1}\dfrac{\partial F_{K\cup\{h\}}}{\partial\beta_t}=-\beta_h^{-1}\dfrac{\partial F_{K\cup\{h\}\cup\{t\}}}{\partial\alpha_t}=-\dfrac{\partial G_{K\cup\{t\}}}{\partial\alpha_t}.
    \end{array}\right.
\end{equation*}
This proves that $F'_h$ is $t$-holomorphic, hence $f'_{s,h}\in\ker(\partial/\partial x_t^c)$.
\end{proof}

As recalled in \S \ref{subsection one variable theory}, every one variable slice function $f$ can be decomposed as $f(x)=f^\circ_s(x)+\operatorname{Im}(x)f'_s(x)$. We now give a similar decomposition for every variable, through the slice product.

\begin{prop}
    Let $f\in\mathcal{S}(\Omega_D)$, then for any $h=1,...,n$ we can decompose
    \begin{equation}
    \label{representation formula several}
        f=f^\circ_{s, h}+\operatorname{Im}(x_h)\odot f'_{s, h}.
    \end{equation}
\end{prop}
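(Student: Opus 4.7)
The plan is to do everything at the level of stem functions via the slice tensor product $\otimes$ on $\mathbb{H}\otimes\mathbb{R}^{2^n}$, and then read off the identity for the induced slice functions by applying $\mathcal{I}$.

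First, I would identify the stem function inducing $\operatorname{Im}(x_h)$. Writing $x_h=\alpha_h+J_h\beta_h$, we have $\operatorname{Im}(x_h)=J_h\beta_h=[J_{\{h\}},\beta_h]$, so $\operatorname{Im}(x_h)=\mathcal{I}(G)$ with $G(z)=e_{\{h\}}\beta_h$. A quick check confirms that $G$ is a stem function: under $z\mapsto\overline{z}^h$ one has $\beta_h\mapsto-\beta_h$, matching the sign $(-1)^{|\{h\}\cap\{h\}|}=-1$ required by (\ref{defining property of stem}); for $m\neq h$ both sides are unchanged.

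Next I would compute the tensor product $G\otimes F'_h$ at the stem level, writing $f=\mathcal{I}(F)$ with $F=\sum_{K\in\mathcal{P}(n)}e_KF_K$. Since $F'_h=\beta_h^{-1}\sum_{K\in\mathcal{P}(n),h\notin K}e_KF_{K\cup\{h\}}$, one gets
\begin{equation*}
    G\otimes F'_h \;=\; e_{\{h\}}\beta_h \;\otimes\; \beta_h^{-1}\!\!\sum_{K:h\notin K}\!\! e_KF_{K\cup\{h\}} \;=\;\sum_{K:h\notin K}(-1)^{|\{h\}\cap K|}e_{\{h\}\Delta K}F_{K\cup\{h\}}.
\end{equation*}
For each $K$ with $h\notin K$ the sign is $+1$ and $\{h\}\Delta K=K\cup\{h\}$, so
\begin{equation*}
    G\otimes F'_h \;=\;\sum_{K:h\notin K}e_{K\cup\{h\}}F_{K\cup\{h\}}.
\end{equation*}
Adding this to $F^\circ_h=\sum_{K:h\notin K}e_KF_K$ and reindexing the second sum through $K'=K\cup\{h\}$ (which ranges exactly over subsets of $\{1,\dots,n\}$ containing $h$) yields $F^\circ_h+G\otimes F'_h=\sum_{K\in\mathcal{P}(n)}e_KF_K=F$. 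Applying $\mathcal{I}$ and using that $\mathcal{I}$ is linear and sends $\otimes$ to $\odot$ produces the claimed decomposition (\ref{representation formula several}).

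The only subtlety to flag is that $F'_h$ carries the factor $\beta_h^{-1}$, hence is a priori only defined on $D\setminus\mathbb{R}_h$; but the factor $\beta_h$ in $G$ cancels it, so $G\otimes F'_h$ extends uniquely to a genuine stem function on all of $D$, and the identity $F=F^\circ_h+G\otimes F'_h$ is an equality of stem functions on $D$. Thus (\ref{representation formula several}) holds on the whole of $\Omega_D$, not just on $\Omega_{D_h}$. No serious obstacle arises: the proof is essentially a bookkeeping computation with the $\Delta$-product, whose only delicate point is the just-mentioned cancellation of the $\beta_h$ factor.
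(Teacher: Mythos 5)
Your proposal is correct and follows essentially the same route as the paper: both prove the identity at the stem-function level by checking that $e_{\{h\}}\beta_h$ induces $\operatorname{Im}(x_h)$ and that $F^\circ_h+(e_{\{h\}}\beta_h)\otimes F'_h=F$, then apply $\mathcal{I}$. The only cosmetic difference is in handling the locus $\mathbb{R}_h$ where $F'_h$ is undefined: the paper verifies $f=f^\circ_{s,h}$ there directly, while you argue that the $\beta_h$ cancellation lets $G\otimes F'_h$ extend to all of $D$ — both are fine.
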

\begin{proof}
    Let $f=\mathcal{I}(F)$, with $F=\sum_{K\in\mathcal{P}(n)}e_KF_K$. Suppose first $x\in\mathbb{R}_h$, i.e. $\operatorname{Im}(x_h)(x)=0$, then by \eqref{defining property of stem}, with the usual notation, we have
    \begin{equation*}
        f(x)=\sum_{K\in\mathcal{P}(n)}[J_K,F_K(z)]=\sum_{K\in\mathcal{P}(n), h\notin K}[J_K,F_K(z)]=f^\circ_{s,h}(x).
    \end{equation*}
    Now, suppose $x\in\Omega_D\setminus\mathbb{R}_h$ and define $\operatorname{Im}(Z_h)(z_1,...,z_n):=e_h\beta_h$, where $z_j:=\alpha_{j}+i\beta_j$. $\operatorname{Im}(Z_h)\in Stem(D)$ and $\mathcal{I}(\operatorname{Im}(Z_h))=\operatorname{Im}(x_h)$. Then
    \begin{equation*}
\begin{split}
            F^\circ_{h}+\operatorname{Im}(Z_h)\otimes F'_h&= \sum_{K\in\mathcal{P}(n),h\notin K}e_KF_K+(e_h\beta_h)\otimes\left( \sum_{K\in\mathcal{P}(n),h\notin K}e_K\beta_h^{-1}F_{K\cup\{h\}}\right)\\
            &= \sum_{K\in\mathcal{P}(n),h\notin K}e_KF_K+ \sum_{K\in\mathcal{P}(n),h\notin K}e_{K\cup\{h\}}F_{K\cup\{h\}}=F.
\end{split}
    \end{equation*}
    Finally, $f=\mathcal{I}(F)=\mathcal{I}(F^\circ_{h}+\operatorname{Im}(Z_h)\otimes F'_h)=f^\circ_{s,h}+\operatorname{Im}(x_h)\odot f'_{s,h}$.
\end{proof}

Next proposition shows that the partial spherical derivatives satisfies a Leibniz-type formula, analogue to the one-dimensional case.
\begin{prop}[Leibniz rule]
\label{Proposizione Leibniz formula}
    Let $f,g\in\mathcal{S}(\Omega_D)$. It holds
    \begin{equation}
    \label{Leibniz rule}
        (f\odot g)'_{s, h}=f'_{s, h}\odot g^\circ_{s, h}+f^\circ_{s, h}\odot g'_{s, h}.
    \end{equation}
\end{prop}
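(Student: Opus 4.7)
The plan is to lift the identity from slice functions to stem functions. Since $\mathcal{I}$ is injective and, by definition, $f\odot g=\mathcal{I}(F\otimes G)$, $f^\circ_{s,h}=\mathcal{I}(F^\circ_h)$, $f'_{s,h}=\mathcal{I}(F'_h)$, it suffices to establish
\begin{equation*}
    (F\otimes G)'_h=F'_h\otimes G^\circ_h+F^\circ_h\otimes G'_h
\end{equation*}
as stem functions on $D\setminus\mathbb{R}_h$, where $\beta_h^{-1}$ is well defined. I would work component-wise in the basis $\{e_L\}_{L\in\mathcal{P}(n)}$.

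First, observe that every stem function appearing on the right-hand side ($F^\circ_h$, $F'_h$, $G^\circ_h$, $G'_h$) is supported on components $e_L$ with $h\notin L$; the same is true of the left-hand side by the definition of $F'_h$. So for $L$ with $h\in L$ both sides vanish, and it remains to match the components $e_L$ with $h\notin L$. For such $L$, the left-hand side gives
\begin{equation*}
    \beta_h^{-1}(F\otimes G)_{L\cup\{h\}}=\beta_h^{-1}\sum_{H\Delta K=L\cup\{h\}}(-1)^{|H\cap K|}F_H G_K.
\end{equation*}

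The key step is to split this sum according to whether $h$ belongs to $H$ or to $K$ (it must belong to exactly one, since $h\in H\Delta K$). In the first case write $H=H'\cup\{h\}$ with $h\notin H'$ and $h\notin K$; then $H\Delta K=(H'\Delta K)\cup\{h\}$, so $H'\Delta K=L$, and $H\cap K=H'\cap K$ because $h\notin K$, so no sign is generated by the relabeling. The symmetric argument applies to the second case with $K=K'\cup\{h\}$. This yields
\begin{equation*}
    (F\otimes G)_{L\cup\{h\}}=\sum_{\substack{H'\Delta K=L\\ h\notin H',\,h\notin K}}(-1)^{|H'\cap K|}F_{H'\cup\{h\}}G_K+\sum_{\substack{H\Delta K'=L\\ h\notin H,\,h\notin K'}}(-1)^{|H\cap K'|}F_H G_{K'\cup\{h\}}.
\end{equation*}
After multiplying by $\beta_h^{-1}$, the two sums are exactly the $e_L$-components of $F'_h\otimes G^\circ_h$ and $F^\circ_h\otimes G'_h$, respectively, computed directly from the definitions of $\otimes$ and of the partial spherical operators.

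The main obstacle is purely bookkeeping: one must track the sign $(-1)^{|H\cap K|}$ under the relabeling $H\mapsto H'\cup\{h\}$ and be certain that no residual sign appears. The critical point is that $h\notin K$ (resp.\ $h\notin H$) in each subsum, so adding or removing $h$ from $H$ (resp.\ $K$) does not affect the cardinality of the intersection, and hence the signs coming from $\otimes$ match on the two sides without additional corrections. Applying $\mathcal{I}$ then concludes the proof.
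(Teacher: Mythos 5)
Your proof is correct and follows essentially the same route as the paper's: reduce to the stem-function identity $(F\otimes G)'_h=F'_h\otimes G^\circ_h+F^\circ_h\otimes G'_h$, compute the $e_L$-component of the product, and split the index set according to whether $h$ lies in the first or the second factor, checking that the sign $(-1)^{|H\cap K|}$ is unaffected. The only cosmetic difference is that you work from the raw definition of $\otimes$ indexed by pairs $(H,K)$ with $H\Delta K=L\cup\{h\}$, while the paper uses the equivalent triple-indexed component formula of \cite[Lemma 2.34]{Several}.
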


\begin{proof}
    Let $f=\mathcal{I}(F)$ and $g=\mathcal{I}(G)$. We have to show that $(F\otimes G)'_h=F'_h\otimes G^\circ_{h}+F^\circ_{h}\otimes G'_h$.
    By \cite[Lemma 2.34]{Several} we have $F'_h\otimes G^\circ_{h}= \sum_{K\in\mathcal{P}(n), h\notin K}e_K(F'_h\otimes G^\circ_{h})_K$, where
    \begin{equation*}
        (F'_h\otimes G^\circ_{h})_K= \sum_{K_1,K_2,K_3\in\mathcal{D}(K)}(-1)^{|K_3|}(F'_h)_{K_1\cup K_3}(G^\circ_{h})_{K_2\cup K_3},
    \end{equation*}
    and $\mathcal{D}(K):=\{(K_1,K_2,K_3)\in\mathcal{P}(n)^3\mid K=K_1\sqcup K_2, K_3\cap K=\emptyset\}.$ By definition of $F'_h$ and $G^\circ_{h}$, the previous equation reduces to
     \begin{equation*}
        (F'_h\otimes G^\circ_{h})_K= \sum_{K_1,K_2,K_3\in\mathcal{D}'_h(K)}(-1)^{|K_3|}F_{K_1\cup K_3\cup\{h\}}G_{K_2\cup K_3},
    \end{equation*}
    with $\mathcal{D}'_h(K):=\{(K_1,K_2,K_3)\in\mathcal{P}(n)^3\mid K=K_1\sqcup K_2, K_3\cap (K\cup\{h\})=\emptyset\}.$ In the very same way, we get 
    \begin{equation*}
        (F^\circ_{h}\otimes G'_h)_K= \sum_{K_1,K_2,K_3\in\mathcal{D}'_h(K)}(-1)^{|K_3|}F_{K_1\cup K_3}G_{K_2\cup K_3\cup\{h\}},
    \end{equation*}
   hence
       \begin{equation*}
        F'_h\otimes G^\circ_{h}+F^\circ_{h}\otimes G'_h= \sum_{K\in\mathcal{P}(n), h\notin K}e_K \sum_{K_1,K_2,K_3\in\mathcal{D}'_h(K)}(-1)^{|K_3|}\left(F_{K_1\cup K_3\cup\{h\}}G_{K_2\cup K_3}+F_{K_1\cup K_3}G_{K_2\cup K_3\cup\{h\}}\right).
    \end{equation*}
    On the other hand, $F\otimes G=\sum_{K\in\mathcal{P}(n)}e_K(F\otimes G)_K$, where
    \begin{equation*}
        \begin{split}
            (F\otimes G)_K= \sum_{K_1,K_2,K_3\in\mathcal{D}(K)}(-1)^{|K_3|}F_{K_1\cup K_3}G_{K_2\cup K_3}.
        \end{split}
    \end{equation*}
    Thus 
    \begin{align*}
        (F\otimes G)'_h&= \sum_{K\in\mathcal{P}(n), h\notin K}e_K\beta_h^{-1}(F\otimes G)_{K\cup\{h\}}\\
        &= \sum_{K\in\mathcal{P}(n), h\notin K}e_K \sum_{K_1,K_2,K_3\in\mathcal{D}(K\cup\{h\})}(-1)^{|K_3|}F_{K_1\cup K_3}G_{K_2\cup K_3}.
    \end{align*}
    Note that
    \begin{equation*}
        \begin{split}
            \mathcal{D}(K\cup\{h\})&=\{(K_1,K_2,K_3)\in\mathcal{P}(n)^3\mid K\cup\{h\}=K_1\sqcup K_2, K_3\cap(K\cup\{h\})=\emptyset\}\\
            &=\{(K_1\cup\{h\},K_2,K_3),(K_1,K_2\cup\{h\},K_3)\mid (K_1,K_2,K_3)\in\mathcal{D}'_h(K)\}
        \end{split}
    \end{equation*}
    so
    \begin{equation*}
        \begin{split}
            (F\otimes G)'_h&= \sum_{K\in\mathcal{P}(n), h\notin K}e_K \sum_{K_1,K_2,K_3\in\mathcal{D}(K\cup\{h\})}(-1)^{|K_3|}F_{K_1\cup K_3}G_{K_2\cup K_3}\\
            &= \sum_{K\in\mathcal{P}(n), h\notin K}e_K \sum_{K_1,K_2,K_3\in\mathcal{D}'_h(K)}(-1)^{|K_3|}\left(F_{K_1\cup\{h\}\cup K_3}G_{K_2\cup K_3}+F_{K_1\cup K_3}G_{K_2\cup\{h\}\cup K_3}\right)\\
            &=F'_h\otimes G^\circ_{h}+F^\circ_{h}\otimes G'_h.
        \end{split}
    \end{equation*}
\end{proof}

\begin{cor}
    Let $f\in\mathcal{S}(\Omega_D)$ and $g\in\mathcal{S}_{c,H}(\Omega_D)$ for some $H\in\mathcal{P}(n)$, then $(f\odot g)'_{s,H}=f'_{s,H}\odot g$.
\end{cor}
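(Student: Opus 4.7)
The plan is to prove the corollary by induction on $|H|$, using the Leibniz rule (Proposition \ref{Proposizione Leibniz formula}) together with item (3) of Proposition \ref{proposizione proprieta derivata sferica}.

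For the base case $|H|=1$, write $H=\{h\}$. Since $g\in\mathcal{S}_{c,h}(\Omega_D)$, Proposition \ref{proposizione proprieta derivata sferica}(3) gives $g^\circ_{s,h}=g$ and $g'_{s,h}=0$. Substituting into (\ref{Leibniz rule}) yields
\begin{equation*}
    (f\odot g)'_{s,h}=f'_{s,h}\odot g^\circ_{s,h}+f^\circ_{s,h}\odot g'_{s,h}=f'_{s,h}\odot g,
\end{equation*}
which is the claim.

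For the inductive step, suppose the statement holds for all subsets of cardinality less than $|H|$, and pick $h\in H$. Set $H'=H\setminus\{h\}$, so that by definition of $F'_H$ in the Definition preceding Lemma~4.2, $(f\odot g)'_{s,H}=((f\odot g)'_{s,h})'_{s,H'}$. The base case gives $(f\odot g)'_{s,h}=f'_{s,h}\odot g$. Since $g\in\mathcal{S}_{c,H}(\Omega_D)=\bigcap_{k\in H}\mathcal{S}_{c,k}(\Omega_D)\subset\mathcal{S}_{c,H'}(\Omega_{D_h})$ (circularity in the remaining variables is preserved on restriction), the inductive hypothesis applied to $f'_{s,h}$ and $g$ gives
\begin{equation*}
    (f'_{s,h}\odot g)'_{s,H'}=(f'_{s,h})'_{s,H'}\odot g=f'_{s,H}\odot g,
\end{equation*}
where the last equality again follows from the iterative definition of $F'_H$. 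Chaining these identities yields the desired formula.

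The only delicate point is ensuring that circularity of $g$ with respect to $H$ descends to circularity with respect to any sub-index set $H'\subset H$, which is immediate from the intersection definition $\mathcal{S}_{c,H}=\bigcap_{h\in H}\mathcal{S}_{c,h}$; and that the partial spherical derivative operators $'_{s,h}$ commute (hence the iterated application is well defined), which is exactly the content of Lemma 4.2. No further computation is needed.
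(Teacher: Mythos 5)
Your proof is correct and follows essentially the same route as the paper: induction on $|H|$, with the base case obtained from the Leibniz rule \eqref{Leibniz rule} combined with Proposition \ref{proposizione proprieta derivata sferica}(3), and the inductive step peeling off one variable $h$ and using the order-independence of the iterated partial spherical derivatives. The extra care you take about circularity descending to sub-index sets and about the restricted domain $\Omega_{D_h}$ is sound but not a genuine departure from the paper's argument.
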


\begin{proof}
    We proceed by induction over $|H|$. Suppose first $|H|=1$, then it follows from Proposition \ref{Proposizione Leibniz formula} and Proposition \ref{proposizione proprieta derivata sferica} (3). Now, suppose by induction that $(f\odot g)'_{s,H}=f'_{s,H}\odot g$ and let $h\notin H$, then in the same way we have
    \begin{equation*}
        (f\odot g)'_{s,H\cup\{h\}}=(f'_{s,h}\odot g^\circ_{s,h}+f^\circ_{s,h}\odot g'_{s,h})'_{s,H}=(f'_{s,h}\odot g)'_{s,H}=f'_{s,H\cup\{h\}}\odot g.
    \end{equation*}
\end{proof}

The next result highlights a fundamental property of  partial spherical derivatives, i.e. harmonicity. The only requirement is regularity in such variable. This extends the result for one-variable slice regular functions \cite[Theorem 6.3, (c)]{Harmonicity}.
\begin{prop}
\label{proposizione derivata sferica e armonica}
Let $f\in\mathcal{S}^1(\Omega_D)$. Suppose that $f\in\ker(\partial/\partial x_h^c)$, for some $h=1,...,n$. Then 
\begin{equation*}
    \Delta_hf'_{s, h}=0.
\end{equation*}
\end{prop}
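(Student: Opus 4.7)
Write $f=\mathcal{I}(F)$ with $F=\sum_{K\in\mathcal{P}(n)}e_KF_K$, so that by the definition of partial spherical derivative
\[
f'_{s,h}(x)=\sum_{K\in\mathcal{P}(n),\, h\notin K}\bigl[J_K,\beta_h^{-1}F_{K\cup\{h\}}(z)\bigr]
\]
on $\Omega_D\setminus\mathbb{R}_h$. The key initial observation is that, because $h\notin K$ in every summand, the quaternion $J_K$ involves only $\{J_j\}_{j\neq h}$ and is therefore constant as $x_h$ varies; in particular left multiplication by $J_K$ commutes with the differential operator $\Delta_h$. It therefore suffices to show, for each fixed $K\not\ni h$ with all the other stem variables frozen, that the $\mathbb{H}$-valued function $x_h\mapsto \beta_h^{-1}F_{K\cup\{h\}}(z)$, which is axial in $x_h$ (it depends on $x_h=\alpha_h+J_h\beta_h$ only through $\alpha_h$ and $\beta_h=|\operatorname{Im}(x_h)|$), lies in $\ker\Delta_h$. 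The hypothesis $\overline\partial_hF\equiv0$ together with the CR system \eqref{Cauchy-Riemann equations} yields, on differentiating the two CR relations for the pair $(F_K,F_{K\cup\{h\}})$ and adding, that every component $F_{K\cup\{h\}}$ with $h\notin K$ is two-dimensionally harmonic in the stem variables, i.e.\ $\bigl(\partial_{\alpha_h}^2+\partial_{\beta_h}^2\bigr)F_{K\cup\{h\}}=0$.

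The core step, which I expect to be the main obstacle, is an axial Laplacian identity that bridges the four-dimensional $\Delta_h$ acting on real coordinates of $x_h$ to the two-dimensional Laplacian in the stem variables controlled by the hypothesis. Precisely, if $\phi$ on $\mathbb{H}\setminus\mathbb{R}$ depends on $x_h$ only through $\alpha_h=\operatorname{Re}(x_h)$ and $\beta_h=|\operatorname{Im}(x_h)|$, say $\phi=\psi(\alpha_h,\beta_h)$, then applying the chain rule to $\beta_h=\sqrt{b^2+c^2+d^2}$ (writing $x_h=\alpha_h+b\,i+c\,j+d\,k$ so that $\Delta_h=\partial_{\alpha_h}^2+\partial_b^2+\partial_c^2+\partial_d^2$) and summing over the three imaginary real directions yields
\[
\Delta_h\phi\;=\;\partial_{\alpha_h}^2\psi\,+\,\partial_{\beta_h}^2\psi\,+\,\frac{2}{\beta_h}\,\partial_{\beta_h}\psi.
\]
Once this identity is in place, the remainder of the argument is routine algebra. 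One sets $\psi=\beta_h^{-1}G$ with $G:=F_{K\cup\{h\}}$ viewed as a function of $(\alpha_h,\beta_h)$ (other stem variables fixed) and expands: the term $\beta_h^{-1}(\partial_{\alpha_h}^2G+\partial_{\beta_h}^2G)$ vanishes by the two-dimensional harmonicity of $G$ established above, while the lower-order contributions $\pm\,2\beta_h^{-3}G$ and $\mp\,2\beta_h^{-2}\partial_{\beta_h}G$ produced respectively by $\partial_{\beta_h}^2(\beta_h^{-1}G)$ and by $(2/\beta_h)\partial_{\beta_h}(\beta_h^{-1}G)$ cancel in pairs. Thus each summand of $f'_{s,h}$ is annihilated by $\Delta_h$, and linearity gives $\Delta_hf'_{s,h}=0$.
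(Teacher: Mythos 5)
Your proposal is correct and follows essentially the same route as the paper: decompose $f'_{s,h}=\sum_{h\notin K}[J_K,\beta_h^{-1}F_{K\cup\{h\}}]$, note $J_K$ is independent of $x_h$, reduce via the axial (chain-rule) form of $\Delta_h$ to $\beta_h^{-1}(\partial_{\alpha_h}^2+\partial_{\beta_h}^2)F_{K\cup\{h\}}$ after the lower-order terms cancel, and conclude by the two-dimensional harmonicity coming from the Cauchy--Riemann system. The paper merely carries out the $\partial_{\beta_h}^2,\partial_{\gamma_h}^2,\partial_{\delta_h}^2$ computation directly instead of first stating the radial Laplacian identity $\Delta_h\phi=\partial_{\alpha_h}^2\psi+\partial_{\beta_h}^2\psi+\tfrac{2}{\beta_h}\partial_{\beta_h}\psi$, which is an organizational rather than mathematical difference.
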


\begin{proof}
Let us introduce a slightly different notation: let $x=(x_1,...,x_n)\in\Omega_D$, with $x_l=\alpha_l+i\beta_l+j\gamma_l+k\delta_l=\alpha_l+J_lb_l$, where 
\begin{equation*}
    J_l:=\dfrac{i\beta_l+j\gamma_l+k\delta_l}{\sqrt{\beta^2_l+\gamma^2_l+\delta^2_l}},\qquad b_l(\beta_l,\gamma_l,\delta_l):=\sqrt{\beta^2_l+\gamma^2_l+\delta^2_l}.
\end{equation*}
    Let $f=\mathcal{I}(F)$, with $F=\sum_{K\in\mathcal{P}(n)}e_KF_K$, then, by definition,
$        f'_{s, h}(x)= \sum_{K\in\mathcal{P}(n),h\notin K}\linebreak[4] [J_K,b_h^{-1}F_{K\cup\{h\}}(z)]
$ and so $$\Delta_hf'_{s, h}= \sum_{K\in\mathcal{P}(n),h\notin K}[J_K,\Delta_h(b_h^{-1}F_{K\cup\{h\}})].$$
Thus, it is enough to prove that 
\begin{equation*}
    \Delta_h(b_h^{-1}F_{K\cup\{h\}})=\left(\partial_{\alpha_h}^2+\partial_{\beta_h}^2+\partial_{\gamma_h}^2+\partial_{\delta_h}^2\right)\left(b_h^{-1}F_{K\cup\{h\}}(z',\alpha_h+ib_h(\beta_h,\gamma_h,\delta_h),z")\right)=0.
\end{equation*}
Immediately we get $\partial_{\alpha_h}^2(b_h^{-1}F_{K\cup\{h\}})=b_h^{-1}\partial_{\alpha_h}^2F_{K\cup\{h\}}$. Moreover, by $$\partial_{\beta_h}b_h=\beta_h/b_h,\qquad\partial_{\beta_h}F_{K\cup\{h\}}=\dfrac{\beta_h}{b_h}\partial_{b_h}F_{K\cup\{h\}},$$
we find
\begin{equation*}
    \partial_{\beta_h} (b_h^{-1}F_{K\cup\{h\}})=-\dfrac{\beta_h}{b_h^3}F_{K\cup\{h\}}+\dfrac{\beta_h}{b_h^2}\partial_{b_h}F_{K\cup\{h\}}
\end{equation*}
and
\begin{equation*}
    \begin{split}
        \partial^2_{\beta_h}(b_h^{-1}F_{K\cup\{h\}})&=\partial_{\beta_h}\left(-\dfrac{\beta_h}{b_h^3}F_{K\cup\{h\}}+\dfrac{\beta_h}{b_h^2}\partial_{b_h}F_{K\cup\{h\}}\right)\\
        &=\dfrac{3\beta_h^2-b_h^2}{b_h^5}F_{K\cup\{h\}}-\dfrac{\beta_h^2}{b_h^4}\partial_{b_h}F_{K\cup\{h\}}+\dfrac{b_h^2-2\beta_h^2}{b_h^4}\partial_{b_h}F_{K\cup\{h\}}+\dfrac{\beta_h^2}{b_h^3}\partial_{b_h}^2F_{K\cup\{h\}}\\
        &=\dfrac{3\beta_h^2-b_h^2}{b_h^5}F_{K\cup\{h\}}+\dfrac{b_h^2-3\beta_h^2}{b_h^4}\partial_{b_h}F_{K\cup\{h\}}+\dfrac{\beta_h^2}{b_h^3}\partial_{b_h}^2F_{K\cup\{h\}}.
    \end{split}
\end{equation*}
Analogously for $\gamma_h$ and $\delta_h$:
\begin{equation*}
    \partial^2_{\gamma_h}(b_h^{-1}F_{K\cup\{h\}})=\dfrac{3\gamma_h^2-b_h^2}{b_h^5}F_{K\cup\{h\}}+\dfrac{b_h^2-3\gamma_h^2}{b_h^4}\partial_{b_h}F_{K\cup\{h\}}+\dfrac{\gamma_h^2}{b_h^3}\partial_{b_h}^2F_{K\cup\{h\}},
\end{equation*}
\begin{equation*}
    \partial^2_{\delta_h}(b_h^{-1}F_{K\cup\{h\}})=\dfrac{3\delta_h^2-b_h^2}{b_h^5}F_{K\cup\{h\}}+\dfrac{b_h^2-3\delta_h^2}{b_h^4}\partial_{b_h}F_{K\cup\{h\}}+\dfrac{\delta_h^2}{b_h^3}\partial_{b_h}^2F_{K\cup\{h\}}.
\end{equation*}
So
\begin{equation*}
    \begin{split}
\left(\partial^2_{\beta_h}+\partial^2_{\gamma_h}+\partial^2_{\delta_h}\right)(b_h^{-1}F_{K\cup\{h\}})&=b_h^{-1}\partial^2_{b_h}F_{K\cup\{h\}}
    \end{split}
\end{equation*}
and finally
\begin{equation*}
    \Delta_h(b_h^{-1}F_{K\cup\{h\}})=b_h^{-1}\left(\partial_{\alpha_h}^2+\partial_{b_h}^2\right)F_{K\cup\{h\}}=0,
\end{equation*}
since $f\in\ker(\partial/\partial x_h^c)$, which implies the $h$-holomophicity of every $F_K$.
\end{proof}

Our last application is a generalization to several variables of Fueter's Theorem, which is a fundamental result in hypercomplex analysis. In modern language, it states that, given a slice regular function $f:\Omega_D\subset\mathbb{H}\rightarrow\mathbb{H}$, its Laplacian generates an axially monogenic function, i.e.
\begin{equation*}
    \overline{\partial}_{CRF}\Delta f=0.
\end{equation*}


\begin{thm}
\label{Teorema di Fueter in piu variabili}
Let $\Omega_D\subset\mathbb{H}^n$ be a circular set and let $f\in\mathcal{S}\mathcal{R}_h(\Omega_D)$ be a slice function, which is slice regular w.r.t. $x_h$, for some $h=1,...,n$. Then $\Delta_h f$ is an axially monogenic function w.r.t. $x_h$, i.e. 
\begin{equation*}
    \Delta_hf\in\ker(\overline{\partial}_{x_h}).
\end{equation*}
In other words, the Fueter map extends to $$\Delta_h:\mathcal{S}\mathcal{R}_h(\Omega_D)\rightarrow\mathcal{A}\mathcal{M}_h(\Omega_D).$$
\end{thm}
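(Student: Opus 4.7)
The plan is to reduce the statement to the harmonicity result for partial spherical derivatives (Proposition \ref{proposizione derivata sferica e armonica}) via the factorization of the Laplacian from (\ref{equazione fattorizzazione laplaciano}) and the Cauchy-Riemann-Fueter identity in Lemma \ref{lemma derivata sferica operatore crf e Laplaciano}. All the essential machinery has already been developed earlier, so the proof is expected to be short; the task is simply to chain the right lemmas in the right order.

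First I would observe that, by Proposition \ref{Proposition characterization h slice regularity}, the hypothesis $f\in\mathcal{S}\mathcal{R}_h(\Omega_D)$ implies $f\in\ker(\partial/\partial x_h^c)$, which is exactly what Lemma \ref{lemma derivata sferica operatore crf e Laplaciano} and Proposition \ref{proposizione derivata sferica e armonica} require. Next, using part (2) of Lemma \ref{lemma derivata sferica operatore crf e Laplaciano}, I rewrite
\begin{equation*}
    \Delta_h f = -4\,\partial_{x_h}\!\left(f'_{s,h}\right).
\end{equation*}
Applying $\overline{\partial}_{x_h}$ to both sides and using the factorization (\ref{equazione fattorizzazione laplaciano}), $4\overline{\partial}_{x_h}\partial_{x_h}=\Delta_h$, gives
\begin{equation*}
    \overline{\partial}_{x_h}\Delta_h f = -4\,\overline{\partial}_{x_h}\partial_{x_h}\!\left(f'_{s,h}\right) = -\Delta_h f'_{s,h}.
\end{equation*}

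Finally, since $f\in\ker(\partial/\partial x_h^c)$, Proposition \ref{proposizione derivata sferica e armonica} yields $\Delta_h f'_{s,h}=0$, whence $\overline{\partial}_{x_h}\Delta_h f = 0$. Combined with the fact that $\Delta_h f$ is still a slice function of class $\mathcal{S}^1$ (being the Laplacian in one block of real variables of a $\mathcal{C}^2$ slice function), this shows $\Delta_h f\in\mathcal{A}\mathcal{M}_h(\Omega_D)$, and therefore the Fueter-type map $\Delta_h\colon\mathcal{S}\mathcal{R}_h(\Omega_D)\to\mathcal{A}\mathcal{M}_h(\Omega_D)$ is well defined, as claimed.

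There is no real obstacle: everything hinges on the partial harmonicity of $f'_{s,h}$, which is Proposition \ref{proposizione derivata sferica e armonica}. The only minor bookkeeping point is to record that the stated equivalence ``$\Delta_h f\in\ker(\overline{\partial}_{x_h})\iff \Delta_h f'_{s,h}=0$'' in the theorem is exactly the content of the computation above, so the two formulations in the statement are consistent.
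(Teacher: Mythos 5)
Your proof is correct and essentially the same as the paper's: both arguments reduce the claim to the harmonicity of $f'_{s,h}$ (Proposition \ref{proposizione derivata sferica e armonica}), differing only in that the paper uses part (1) of Lemma \ref{lemma derivata sferica operatore crf e Laplaciano} ($\overline{\partial}_{x_h}f=-f'_{s,h}$) and commutes $\overline{\partial}_{x_h}$ with $\Delta_h$, while you use part (2) together with the factorization (\ref{equazione fattorizzazione laplaciano}) --- two interchangeable one-line computations. Your closing remark that the displayed formula in the statement should be read as $\overline{\partial}_{x_h}\Delta_h f=0$ is the right reading, and your observation that $\Delta_h f$ remains a slice function is a bookkeeping point the paper leaves implicit.
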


\begin{proof}
    Since $f\in\mathcal{S}\mathcal{R}_h(\Omega_D)$, we can apply Lemma \ref{lemma derivata sferica operatore crf e Laplaciano} 1. and Proposition \ref{proposizione derivata sferica e armonica}
    \begin{equation*}
        \overline{\partial}_{x_h}\Delta_hf=\Delta_h\overline{\partial}_{x_h}f=-2\Delta_hf'_{s,h}=0.
    \end{equation*}
\end{proof}

\printbibliography

\end{document}